\tikzstyle{NE-lines}=[pattern=north east lines, pattern color=black!45]
\newtheorem{theorem}{Theorem}
\newtheorem{proposition}[theorem]{Proposition}
\newtheorem{lemma}[theorem]{Lemma}
\newtheorem{corollary}[theorem]{Corollary}
\theoremstyle{definition}
\newtheorem*{remark*}{Remark}
\newtheorem{example}{Example}
\newenvironment{DrawPerm}
{
\begin{tikzpicture}[scale=0.45, baseline=20pt]
}
{
\end{tikzpicture}
}
\newcommand{\fillPerm}[3]{
\draw [semithick] (0.001,0.001) grid ({#2},{#3});
\foreach \y [count=\xi] in {#1};
	\foreach \x in {1,...,\xi};
		\foreach \y [count=\x] in {#1} \filldraw (\x,\y) circle (6pt);
}
\newcommand{\meshBox}[2]{\fill[NE-lines] #1 rectangle #2;}
\newcommand{\etal}{et~al.}
\newcommand{\catalan}{\mathfrak{c}} 
\newcommand{\Sym}{\mathrm{Sym}} 
\newcommand{\identity}{\mathsf{id}} 
\newcommand{\reverse}{^{\mathsf{R}}}
\newcommand{\Sort}{\mathrm{Sort}} 
\newcommand{\mapsigma}[1]{\mathcal{S}^{#1}}
\newcommand{\sorted}{\mathrm{Sorted}} 
\newcommand{\fert}[1]{\mathrm{fert}^{#1}} 
\title{Dynamical aspects of $\sigma$-machines}
\author{Giulio Cerbai\\
\vspace{0.1cm}\\
\emph{University of Iceland}\\
\emph{Science Institute, 107 Reykjavik}\\
\texttt{giulio@hi.is}}
\date{}
\begin{document}

\thispagestyle{empty}
\maketitle


\begin{abstract}
The $\sigma$-machine was recently introduced by Cerbai, Claesson and Ferrari as a tool to gain a better insight on the problem of sorting permutations with two stacks in series. It consists of two consecutive stacks, which are restricted in the sense that their content must at all times avoid a certain pattern: a given~$\sigma$, in the first stack, and~$21$, in the second. Here we prove that in most cases sortable permutations avoid a bivincular pattern~$\xi$. We provide a geometric decomposition of $\xi$-avoiding permutations and use it to count them directly. Then we characterize the permutations with the property that the output of the $\sigma$-avoiding stack does not contain~$\sigma$, which we call effective. For $\sigma=123$, we obtain an alternative method to enumerate sortable permutations. Finally, we classify $\sigma$-machines and determine the most challenging to be studied.
\end{abstract}

\section{Introduction}\label{section_intro}

The problem of sorting permutations using a stack, together with its many variants, has been extensively studied in the literature (see~\cite{Bo} for a survey on stack sorting disciplines). A stack is a data structure equipped with two operations: \emph{push}, which adds the next element of the input to the stack, at the top; and \emph{pop}, which extracts from the stack the most recently pushed element. To sort a permutation means to produce a sorted output, that is, the identity permutation. As it is well known, an optimal algorithm to sort permutations using a stack consists in using a \emph{greedy} procedure on an \emph{increasing} stack. Here, the word increasing indicates that elements inside the stack are always mantained in increasing order, reading from top to bottom; and greedy means that a push is always performed, as long as it does not violate the previous constraint of monotonicity. If it does, a pop is performed instead. There are several interesting variations of classical stack sorting. The most straightforward generalization to two stacks was proposed by Julian West~\cite{We2}, who considered two passes through an increasing stack. Later on, Smith~\cite{Sm} considered a decreasing stack followed by an increasing one. Cerbai, Cioni and Ferrari~\cite{CeCiF} generalized Smith's device by allowing multiple decreasing stacks in series.

While things are usually rather manageable when considering a deterministic process, they become much more challenging if the problem is considered in its full generality, that is, if the sorting algorithm is not fixed. For instance, sorting with two consecutive stacks is still considered among the hardest open problems in combinatorics. Pierrot and Rossin~\cite{PR} proved that the problem of deciding whether a given permutation is sortable is polynomial-time solvable. It is known that sortable permutations form a class with infinite basis, but both the basis and the enumeration of the class remain unknown. In attempt to obtain a better understanding of this problem, Cerbai, Claesson and Ferrari~\cite{CeClF} introduced $\sigma$-stacks and $\sigma$-machines. The underlying idea is rather simple. An increasing stack can be equivalently regarded as a $21$-avoiding stack (briefly $21$-stack), meaning that at any point the $21$-stack is not allowed to contain an occurrence of the pattern $21$ (once again reading its content from top to bottom). The $\sigma$-stack is obtained by replacing~$21$ with a given permutation~$\sigma$. Finally, the $\sigma$-machine consists in making a pass through a $\sigma$-stack, followed by a pass through a $21$-stack, each time using a greedy procedure. This is an extremely natural generalization of West's device, which is obtained by simply choosing $\sigma=21$. The combinatorics underlying $\sigma$-machines proved to be extremely rich, and led to several follow-up papers. The first two~\cite{CeClF,CeClFS} were devoted to the study of the so-called $\sigma$-sortable permutations, for specific choices of $\sigma$. The $(\sigma,\tau)$-machine~\cite{BCKV,Be} generalized the $\sigma$-machine to pairs of patterns. The current author~\cite{Ce} extended the domain of $\sigma$-machines to Cayley permutations.

The main objective of this paper is to continue the study of $\sigma$-machines in a twofold way. Initially we focus on $\sigma$-sortable permutations and show that in most cases they avoid a certain bivincular pattern. Then we initiate the study of $\sigma$-sorted permutations, shifting our focus to the outputs produced by the $\sigma$-stack.

More in detail, in Section~\ref{section_preliminaries} we introduce the necessary definitions and tools.

In Section~\ref{section_bivincular_result} we define a bivincular pattern $\xi$ and show that $\sigma$-sortable permutations are always $\xi$-avoiding, unless $\sigma=12\ominus\beta$ is the skew-sum of $12$ with a non-empty $231$-avoding permutation $\beta$ (Theorem~\ref{theorem_bivincular_pattern_gen_result}). At present, this is one of the few known results that cover a family of $\sigma$-machines under the same umbrella. We also give a geometrical description of permutations avoiding $\xi$, and use it to enumerate them. The results obtained in this section suggest that the most challenging $\sigma$-machines to be studied arise when $\sigma=12\ominus\beta$.

In Section~\ref{section_sorted_perm_fertilities} we analyze some dynamical aspects of $\sigma$-machines by regarding a $\sigma$-stack as an operator $\pi\mapsto\mapsigma{\sigma}(\pi)$. This approach has been adopted recently in several works related to stack sorting~\cite{BCKV,Be,Ce,DZ}. Here we introduce some new properties and provide the first related results. We define $\sigma$-fertilties and $\sigma$-sorted permutations, two notions whose classical analogues are largely investigated. Then we characterize the permutations $\sigma$ having the property that the $\sigma$-stack does not create occurrences of $\sigma$, when the input is $\sigma$-sortable (Corollary~\ref{corollary_sorted_class}). We call these permutations \emph{effective}. Finally, we use $\sigma$-sorted permutations, together with the fact that $123$ is effective, to obtain an alternative method of enumerating $123$-sortable permutations. 

In Section~\ref{section_final_remarks} we use all the results obtained so far to initiate a classification of $\sigma$-machines. We end the paper with some questions and open problems.

\section{Preliminaries}\label{section_preliminaries}

Denote by $\Sym_n$ the set of permutations of length $n$, written in one-line notation, and let $\Sym=\bigcup_{n\ge 1}\Sym_n$. Let $\pi=\pi_1\cdots\pi_n\in\Sym_n$ and $\sigma\in\Sym_k$, for some $k\le n$. Then $\pi$ \emph{contains} $\sigma$ if there is a subsequence $\pi_{i_1}\cdots\pi_{i_k}$, with $i_1<\dots<i_k$, that is \emph{order isomorphic} to~$\sigma$; that is, such that $\pi_{i_j}<\pi_{i_{\ell}}$ if and only if $\sigma_j<\sigma_{\ell}$. Less formally, if the entries of $\pi_{i_1}\cdots\pi_{i_k}$ have the same relative order as $\sigma$.  In this case, we write $\pi_{i_1}\cdots\pi_{i_k}\simeq\sigma$ and say that $\pi_{i_1}\cdots\pi_{i_k}$ is an \emph{occurrence} of $\sigma$ in $\pi$. Otherwise, we say that $\pi$ \emph{avoids}~$\sigma$. Denote by $\Sym(\sigma)$ the set of permutations that avoid~$\sigma$ and let $\Sym_n(\sigma)=\Sym(\sigma)\cap\Sym_n$. Similarly, let $\Sym(B)$ denote the set of permutations avoiding every pattern in $B$, where $B\subseteq\Sym$. Pattern containment is a partial order on $\Sym$ and the resulting poset is the \emph{permutation pattern poset}. As a matter of fact, we will often write $\pi\ge\sigma$, if $\pi$ contains $\sigma$, and $\pi\not\ge\sigma$, otherwise. A \textit{permutation class}, or simply \emph{class}, is a downset of the permutation pattern poset. A class is uniquely determined by the set of minimal elements in the complementary upset, which is called its \textit{basis}. For instance, $\Sym(B)$ is a class with basis $B$. Note also that, if $\sigma\le\pi$, then $\Sym(\sigma)\subseteq\Sym(\pi)$, a fact that will be used repeatedly throughout this paper. We refer the reader to~\cite{Bev} for a brief introduction to permutation patterns and to~\cite{Bo2} for a more comprehensive textbook.

A \emph{bivincular} pattern~\cite{BMCDK} of length $k$ is a triple $\xi=(\sigma,X,Y)$, where $\sigma\in\Sym_k$ and $X,Y$ are subsets of $\lbrace 0,1,\dots,k \rbrace$. An occurrence of the bivincular pattern $\xi=(\sigma,X,Y)$ in a permutation $\pi$ is a classical occurrence $\pi_{i_1}\cdots\pi_{i_k}$ of $\sigma$ such that:
\begin{itemize}
\item $i_{x+1}=i_x+1$, for each $x \in X$;
\item $j_{y+1}=j_{y}+1$, for each $y \in Y$,
\end{itemize}
where $\lbrace\pi_{i_1},\dots,\pi_{i_k}\rbrace=\lbrace j_1,\dots,j_k \rbrace$, with $j_1<\cdots<j_k$; by conventions, $i_0=j_0=0$ and $i_{k+1}=j_{k+1}=n+1$. The set $X$ identifies constraints of adjacency on the positions of the elements $\sigma$, while the set $Y$, symmetrically, identifies constraints on their values. An example of bivincular pattern is depicted in Figure~\ref{figure_bivincular_pattern_132_0_2}.

Given two permutations $\alpha=\alpha_1\cdots\alpha_n$ and $\beta=\beta_1\cdots\beta_m$, the \emph{direct sum} of $\alpha$ and $\beta$ is the permutation $\alpha\oplus\beta=\alpha\beta'$, where $\beta'$ is obtained from $\beta$ by adding $n$ to each of its entries. In other words, $\alpha\oplus\beta$ is the only permutation $\gamma_1\cdots\gamma_n\gamma_{n+1}\cdots\gamma_{n+m}$ such that $\gamma_i<\gamma_j$ for each $i\le n$ and $j\ge n+1$, $\gamma_1\cdots\gamma_n\simeq\alpha$ and $\gamma_{n+1}\cdots\gamma_{n+m}\simeq\beta$. The \emph{skew sum} $\alpha\ominus\beta$ of $\alpha$ and $\beta$ is obtained analogously by requiring $\gamma_i>\gamma_j$ for each $i \le n$ and $j\ge n+1$. Finally, the \emph{reverse} of a permutation $\pi=\pi_1\cdots\pi_n$ is defined as $\pi\reverse=\pi_n\cdots\pi_1$.

\subsection{The $\sigma$-machine}

Let $\sigma$ be a permutation. A \emph{$\sigma$-stack}~\cite{CeClF} is a restricted stack that at all times is not allowed to contain occurrences of $\sigma$, reading its content from top to bottom. The \emph{$\sigma$-machine} consists in making a pass through a $\sigma$-stack, followed by a pass through a $21$-stack, each time using a greedy algorithm. As anticipated in Section~\ref{section_intro}, in this context the word greedy indicates that a push operation is always performed, as long as the constraint of the stack is not violated. Alternatively, we wish to give the following dynamical definition of the $\sigma$-machine. Denote by $\mapsigma{\sigma}$ the $\sigma$-stack operator
\begin{align*}
\mapsigma{\sigma}:\ &\Sym\to\Sym\\
&\pi\mapsto\mapsigma{\sigma}(\pi),
\end{align*}
where $\mapsigma{\sigma}(\pi)$ is the result of the following sorting process on input $\pi$:
\begin{enumerate}
\item Scan $\pi$ from left to right and push the current element onto the $\sigma$-stack, as long as the stack remains $\sigma$-avoding.
\item If pushing the next element of $\pi$ would create an occurrence of $\sigma$ in the $\sigma$-stack, instead pop the top of the $\sigma$-stack into the output.
\item If there are no more elements to be processed, empty the $\sigma$-stack by repeatedly performing a pop operation.
\end{enumerate}
Then, to sort a permutation $\pi$ with the $\sigma$-machine means to compute
$$
\left(\mapsigma{21}\circ\mapsigma{\sigma}\right)(\pi).
$$
As an easy example to familiarize with the $\sigma$-stack, note that if $\pi$ avoids~$\sigma\reverse$, then $\mapsigma{\sigma}\bigl(\pi\bigl)=\pi\reverse$. Indeed, in this case the restriction of the $\sigma$-stack is never triggered and all the entries of $\pi$ are free to enter the $\sigma$-stack. Finally, the $\sigma$-stack is emptied and the output is the reverse of $\pi$. The $\sigma$-machine is illustrated in Figure~\ref{figure_sigma_machine} and the step-by-step action of the $231$-stack on input $2413$ is illustrated in Figure~\ref{figure_sorting_operations}. Let $\identity_n=12\cdots n$ be the increasing permutation of length $n$. A permutation $\pi\in\Sym_n$ is \emph{$\sigma$-sortable} if
$$
\left(\mapsigma{21}\circ\mapsigma{\sigma}\right)(\pi)=\identity_n.
$$

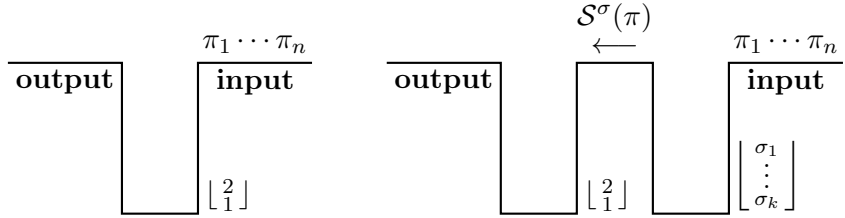
\begin{figure}
\centering
\begin{tikzpicture}[baseline=20pt]
\draw[thick] (0,2)--(1.5,2)--(1.5,0)--(2.5,0)--(2.5,2)--(4,2);
\node at (3.25,1.75){\textbf{input}};
\node at (0.75,1.75){\textbf{output}};
\node at (3.25,2.25){$\pi_1\cdots\pi_n$};
\node at (2.9,0.25){$\left\lfloor\begin{smallmatrix}2\\1\end{smallmatrix}\right\rfloor$};
\end{tikzpicture}
\qquad
\begin{tikzpicture}[baseline=20pt]
\draw[thick] (0,2)--(1.5,2)--(1.5,0)--(2.5,0)--(2.5,2)--(3.5,2)--(3.5,0)--(4.5,0)--(4.5,2)--(6,2);
\node at (5.25,1.75){\textbf{input}};
\node at (0.75,1.75){\textbf{output}};
\node at (5.25,2.25){$\pi_1\cdots\pi_n$};
\node at (2.9,0.25){$\left\lfloor\begin{smallmatrix}2\\1\end{smallmatrix}\right\rfloor$};
\node at (5,0.5){$\left\lfloor\begin{smallmatrix}\sigma_1\\[-1ex] \vdots\\ \sigma_k\end{smallmatrix}\right\rfloor$};
\node[above] at (3,2){$\longleftarrow$};
\node at (3,2.6){$\mapsigma{\sigma}(\pi)$};
\end{tikzpicture}
\caption{The usual representation of sorting with one stack, on the left. The $\sigma$-machine, on the right. The smaller stack in the lower right corner pinpoints the forbidden pattern.}\label{figure_sigma_machine}
\end{figure}

Let $\Sort(\sigma)$ be the set of $\sigma$-sortable permutations and let $\Sort_n(\sigma)=\Sort(\sigma)\cap\Sym_n$. It is well known~\cite{Kn} that a permutation is sortable by a classical stack (i.e. a $21$-stack) if and only if it avoids $231$. Since $\mapsigma{\sigma}(\pi)$ is the input of the $21$-stack, we have that
$$
\pi\in\Sort(\sigma)\text{ if and only if }\mapsigma{\sigma}(\pi)\in\Sym(231)
$$
and thus
$$
\Sort(\sigma)=\bigl(\mapsigma{\sigma}\bigr)^{-1}\bigl(\Sym(231)\bigr).
$$
This simple fact allows us to determine the $\sigma$-sortability of $\pi$ just by checking whether $\mapsigma{\sigma}(\pi)$ avoids or contains $231$. Two general questions that motivate the research on $\sigma$-machines are the following:

\begin{itemize}
\item Characterize the set $\Sort(\sigma)$. Can $\Sort(\sigma)$ be described in terms of (generalized) pattern avoidance?
\item Compute the sequence $\lbrace|\Sort_n(\sigma)|\rbrace_{n\ge 1}$ counting $\sigma$-sortable permutations.
\end{itemize}

The leading paper~\cite{CeClF} provides a characterization of the set of permutations~$\sigma$ such that $\Sort(\sigma)$ is a permutation class. Regarding permutations of length two, a simple inductive argument~\cite{CeClF} shows that $\Sort(12)=\Sym(213)$. On the other hand, $\Sort(21)$ is the set of West-$2$-stack-sortable permutations~\cite{We2}, which is not a class (although it is described by avoidance of a generalized pattern). For what concerns longer permutations, let us recall two more results from~\cite{CeClF}. Given $\sigma\in\Sym_k$, denote by $\hat{\sigma}$ the permutation $\hat{\sigma}=\sigma_2\sigma_1\sigma_3\cdots\sigma_k$ obtained from $\sigma$ by interchanging $\sigma_1$ with $\sigma_2$.

\begin{lemma}\label{lemma_hat_sigma}
Let $\pi$ be an input permutation for the $\sigma$-machine. If $\pi\ge\sigma\reverse$, then $\mapsigma{\sigma}(\pi)\ge\hat{\sigma}$. Otherwise, if $\pi$ avoids $\sigma\reverse$, then $\mapsigma{\sigma}(\pi)=\pi\reverse$. Furthermore, we have:
$$
\Sym\bigl(132,\sigma\reverse\bigr)\subseteq\Sort(\sigma).
$$
\end{lemma}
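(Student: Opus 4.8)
The plan is to treat the three assertions in turn, since each one feeds the next.

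\emph{The ``otherwise'' clause.} If $\pi$ avoids $\sigma\reverse$, I claim that no forced pop ever occurs while the $\sigma$-stack processes $\pi$, so every entry is pushed (in order) and then popped during the final emptying, giving $\mapsigma{\sigma}(\pi)=\pi\reverse$. Indeed, were the first forced pop to happen when the incoming entry $x=\pi_j$ meets the stack, the stack content at that moment would be, from bottom to top, exactly $\pi_1\cdots\pi_{j-1}$ (nothing has been popped yet), and pushing $x$ would create an occurrence of $\sigma$ in $\pi_j\pi_{j-1}\cdots\pi_1$ read top to bottom; reversing that occurrence yields an occurrence of $\sigma\reverse$ inside $\pi_1\cdots\pi_j$, contradicting $\pi\in\Sym(\sigma\reverse)$. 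This is essentially the remark already recorded in the text.

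\emph{The clause $\pi\ge\sigma\reverse$.} Here I must show $\mapsigma{\sigma}(\pi)\ge\hat{\sigma}$. By the contrapositive of the previous paragraph a forced pop does occur; consider the first one, with incoming entry $x$, and write $s_1\cdots s_m$ (bottom to top) for the stack just before $x$ arrives. The machine pops $s_m,s_{m-1},\dots,s_{p+1}$ and then pushes $x$ onto the remaining (possibly empty) stack $s_1\cdots s_p$. I would then use: (i) just before $s_{p+1}$ is popped the stack content $s_{p+1}s_p\cdots s_1$ (top to bottom) avoids $\sigma$, as every stack content does, while putting $x$ on top creates an occurrence of $\sigma$; and (ii) right after that pop $x s_p\cdots s_1$ again avoids $\sigma$, since $x$ is then successfully pushed. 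Consequently any occurrence of $\sigma$ in $x s_{p+1}s_p\cdots s_1$ must involve both $x$ and $s_{p+1}$; as $x$ sits above everything and $s_{p+1}$ is next, $x$ plays the role of $\sigma_1$ and $s_{p+1}$ the role of $\sigma_2$, while the roles $\sigma_3,\dots,\sigma_k$ are played by entries $s_{a_3},\dots,s_{a_k}$ with $p\ge a_3>\cdots>a_k\ge 1$ lying below $s_{p+1}$. Finally I track the moments these $k$ entries leave the stack: $s_{p+1}$ exits first (it is popped before $x$ is even pushed); $x$ exits next, since from the instant it is pushed until it is popped it sits strictly above $s_{a_3},\dots,s_{a_k}$; and then $s_{a_3},\dots,s_{a_k}$ exit in that order (last-in-first-out among entries of decreasing position). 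Hence $\mapsigma{\sigma}(\pi)$ contains the subsequence $s_{p+1},x,s_{a_3},\dots,s_{a_k}$, whose pattern is $\sigma_2\sigma_1\sigma_3\cdots\sigma_k=\hat{\sigma}$.

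\emph{The inclusion.} For $\pi\in\Sym(132,\sigma\reverse)$, the first clause gives $\mapsigma{\sigma}(\pi)=\pi\reverse$, and since $\pi$ avoids $132$, its reverse $\pi\reverse$ avoids $231$. As $\pi\in\Sort(\sigma)$ if and only if $\mapsigma{\sigma}(\pi)\in\Sym(231)$, we conclude $\pi\in\Sort(\sigma)$, so $\Sym(132,\sigma\reverse)\subseteq\Sort(\sigma)$.

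The delicate part, which I would write out most carefully, is the bookkeeping in the second step: identifying precisely the stack content at the first forced pop, verifying that $s_{p+1}$ genuinely participates in an occurrence of $\sigma$ and does so in the role $\sigma_2$ (not a later value), and confirming the last-in-first-out exit order of $x$ and the $s_{a_i}$. The tiny cases $k=2$ deserve a separate glance to see they fit the same scheme (there $p$ may be $0$ and there are no entries $s_{a_3},\dots$), but the lemma is mainly of interest for $k\ge 3$.
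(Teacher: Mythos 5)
Your proof is correct, and it is essentially the standard argument (the paper itself gives no proof of this lemma --- it is imported verbatim from the earlier work \cite{CeClF}, whose proof likewise analyses the first time the stack restriction is triggered). All the key points check out: the stack invariant forces any occurrence of $\sigma$ in $x\,s_{p+1}s_p\cdots s_1$ to use both $x$ (in the role of $\sigma_1$) and $s_{p+1}$ (in the role of $\sigma_2$), and the LIFO exit order $s_{p+1},x,s_{a_3},\dots,s_{a_k}$ then produces $\hat{\sigma}$ in the output. One small logical slip to repair: the implication you need at the start of the second step, namely ``$\pi\ge\sigma\reverse$ implies a forced pop occurs,'' is \emph{not} the contrapositive of what your first paragraph proves (which is ``$\pi$ avoids $\sigma\reverse$ implies no forced pop''); it is its inverse. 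The needed implication is nonetheless immediate: if no pop is ever forced, the entire permutation ends up in the stack, which then reads $\pi\reverse$ from top to bottom and must avoid $\sigma$, so $\pi$ avoids $\sigma\reverse$. Add that one line and the argument is complete.
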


\begin{theorem}\label{theorem_suff_cond_class}
Let $\sigma$ be a permutation of length three or more. Then $\Sort(\sigma)$ is a class if and only if $\hat{\sigma}\ge 231$. In this case, we have $\Sort(\sigma)=\Sym(132)$, if $\sigma\ge 231$, and $\Sort(\sigma)=\Sym(132,\sigma\reverse)$, if $\sigma\not\ge 231$.
\end{theorem}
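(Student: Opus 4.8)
The plan is to prove the two implications separately, reading off the exact form of $\Sort(\sigma)$ along the way.

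\textbf{If $\hat\sigma \ge 231$.} Suppose $\hat\sigma$ contains $231$. If $\pi$ contains $\sigma\reverse$, then Lemma~\ref{lemma_hat_sigma} gives $\mapsigma{\sigma}(\pi)\ge\hat\sigma\ge 231$, so $\mapsigma{\sigma}(\pi)\notin\Sym(231)$ and $\pi$ is not $\sigma$-sortable; hence $\Sort(\sigma)\subseteq\Sym(\sigma\reverse)$. If instead $\pi$ avoids $\sigma\reverse$, then Lemma~\ref{lemma_hat_sigma} gives $\mapsigma{\sigma}(\pi)=\pi\reverse$, and since $231\reverse=132$ we get $\pi\in\Sort(\sigma)\iff\pi\reverse\in\Sym(231)\iff\pi\in\Sym(132)$. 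Combining, $\Sort(\sigma)=\Sym(132)\cap\Sym(\sigma\reverse)=\Sym(132,\sigma\reverse)$, which is a class. If in addition $\sigma\ge 231$, then $\sigma\reverse\ge 132$, so $\Sym(132)\subseteq\Sym(\sigma\reverse)$ and the intersection collapses to $\Sym(132)$; if $\sigma\not\ge 231$, then $\sigma\reverse$ avoids $132$ and has length at least $3$, so it is incomparable with $132$ and $\Sym(132,\sigma\reverse)$ is the reduced answer.

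\textbf{If $\hat\sigma$ avoids $231$.} I would prove the contrapositive by showing $\Sort(\sigma)$ is not a downset, i.e.\ by exhibiting $\tau\le\pi$ with $\pi\in\Sort(\sigma)$ and $\tau\notin\Sort(\sigma)$. Two preliminary facts are needed. First, $\mapsigma{\sigma}(\sigma\reverse)=\hat\sigma$: running the $\sigma$-stack on $\sigma\reverse=\sigma_k\cdots\sigma_1$, the entries $\sigma_k,\dots,\sigma_2$ are all pushed without triggering a pop (during this phase the stack holds at most $k-1$ elements, hence cannot contain $\sigma$), whereas pushing $\sigma_1$ would turn the stack content into $\sigma$ itself; so $\sigma_2$ is popped, $\sigma_1$ is pushed, and emptying the stack yields $\sigma_2\sigma_1\sigma_3\cdots\sigma_k=\hat\sigma$. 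Since $\hat\sigma$ avoids $231$, this shows $\sigma\reverse\in\Sort(\sigma)$. Second, for any $\sigma\ne 231$ the permutation $132$ avoids $\sigma\reverse$ for length reasons, so $\mapsigma{\sigma}(132)=132\reverse=231$ and $132\notin\Sort(\sigma)$. Hence, if $\sigma$ contains $231$ and $|\sigma|\ge 4$, then $\sigma\reverse$ properly contains $132$, and the pair $(\pi,\tau)=(\sigma\reverse,132)$ already witnesses that $\Sort(\sigma)$ is not a downset.

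The remaining cases are where the real work lies: $\sigma=231$, and the permutations $\sigma$ avoiding $231$ (for which $\sigma\reverse$ avoids $132$ and the previous witness fails). When $\sigma$ avoids $231$, any $\sigma$-sortable permutation containing $132$ must also contain $\sigma\reverse$, so one has to construct one explicitly — typically by inserting or prepending a single entry to $\sigma\reverse$ — and then certify $\pi\in\Sort(\sigma)$ by running the $\sigma$-stack and checking the output avoids $231$; the pair $(\pi,132)$ then finishes the argument. (For $\sigma=123$, for instance, $\pi=4132$ works: $\mapsigma{123}(4132)=3214\in\Sym(231)$, while $132\le 4132$ is not $123$-sortable.) The permutation $\sigma=231$ is genuinely exceptional, since there $132$ itself is $\sigma$-sortable; instead one pins down a small non-sortable pattern (such as $1324$) sitting inside an explicitly constructed sortable permutation. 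I expect the main obstacle to be precisely this last step: choosing the witnesses uniformly over the possible shapes of $\sigma$ — essentially according to whether $\sigma_1<\sigma_2$ and whether $\sigma_2$ is the least entry of $\sigma$, the same dichotomy that governs when $\hat\sigma$ avoids $231$ — and carrying out the $\sigma$-stack computations that certify sortability of $\pi$ together with non-sortability of $\tau$.
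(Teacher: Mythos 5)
This theorem is recalled in the paper from~\cite{CeClF} without proof, so there is no internal argument to compare against; judging the proposal on its own terms, your forward direction is complete and correct: Lemma~\ref{lemma_hat_sigma} splits inputs according to containment of $\sigma\reverse$, giving $\Sort(\sigma)=\Sym\bigl(132,\sigma\reverse\bigr)$ whenever $\hat{\sigma}\ge 231$, and the collapse to $\Sym(132)$ when additionally $\sigma\ge 231$ is right. Your computation $\mapsigma{\sigma}\bigl(\sigma\reverse\bigr)=\hat{\sigma}$ is also correct and is the key to the converse.

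The gap is in the converse, and you have in effect named it yourself. The witness pair $\bigl(\sigma\reverse,132\bigr)$ only works when $\sigma\ge 231$ and $\sigma$ has length at least four. For every $\sigma$ of length at least three that avoids $231$ --- which includes $123$, $132$, $213$, $312$, i.e.\ most of the cases the theorem is really about --- the permutation $\sigma\reverse$ avoids $132$ and your argument produces nothing; and $\sigma=231$ is doubly exceptional because there $132=\sigma\reverse$ is itself sortable, so one must first locate a non-sortable pattern (your candidate $1324$ does work, since $\mapsigma{231}(1324)=3421\ge 231$) and then exhibit a sortable permutation containing it, which you have not done and which is not obvious: the counting data in the paper shows that only $17$ of the $120$ permutations of length five contain $1324$ while $18$ are non-sortable, so existence cannot be read off from numerology. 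A single verified instance ($\sigma=123$, $\pi=4132$) plus the phrase ``choosing the witnesses uniformly over the possible shapes of $\sigma$'' is a statement of the problem rather than a solution: the entire content of the ``only if'' direction is a construction, valid for every $\sigma$ with $\hat{\sigma}\not\ge 231$, of a $\sigma$-sortable permutation containing a non-$\sigma$-sortable one, together with a run of the $\sigma$-stack certifying both claims. Until that uniform construction is supplied and checked, the converse is unproved.
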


Theorem~\ref{theorem_suff_cond_class} immediately solves the problems of characterizing and enumerating the set $\Sort(\sigma)$, when $\Sort(\sigma)$ is a permutation class. For instance, the only class among permutations of length three is $\Sort(321)=\Sym(132,123)$. On the other hand, all the other cases are remarkably arduous. At present, only two have been enumerated: $\Sort(123)$, by means of a bijection with a set of pattern-avoiding Schr\"oder paths~\cite{CeClF}; and $\Sort(132)$, by means of a bijection with a class of pattern-avoiding restricted growth functions~\cite{CeClFS}.

\begin{figure}
\scalebox{0.925}{
$$
\def\arraystretch{4.5}
\begin{tabular}{c|c|c|c}
\begin{tikzpicture}[scale=1, baseline=20pt]
\draw[thick] (0,1.5)--(1,1.5)--(1,0)--(1.5,0)--(1.5,1.5)--(2.5,1.5);
\node at (2,1.25){\footnotesize{input}};
\node at (0.45,1.25){\footnotesize{output}};
\node at (0.25,0.25){\footnotesize{Step 1}};
\node at (1.775,0.25){\scriptsize{$\left\lfloor\begin{smallmatrix}2\\3\\1\end{smallmatrix}\right\rfloor$}};
\node at (2,1.75){$\cancel{2}413$};
\node at (1.25,0.2){$2$};
\end{tikzpicture}
&
\begin{tikzpicture}[scale=1, baseline=20pt]
\draw[thick] (0,1.5)--(1,1.5)--(1,0)--(1.5,0)--(1.5,1.5)--(2.5,1.5);
\node at (2,1.25){\footnotesize{input}};
\node at (0.45,1.25){\footnotesize{output}};
\node at (0.25,0.25){\footnotesize{Step 2}};
\node at (1.775,0.25){\scriptsize{$\left\lfloor\begin{smallmatrix}2\\3\\1\end{smallmatrix}\right\rfloor$}};
\node at (2,1.75){$\cancel{4}13$};
\node at (1.25,0.2){$2$};
\node at (1.25,0.6){$4$};
\end{tikzpicture}
&
\begin{tikzpicture}[scale=1, baseline=20pt]
\draw[thick] (0,1.5)--(1,1.5)--(1,0)--(1.5,0)--(1.5,1.5)--(2.5,1.5);
\node at (2,1.25){\footnotesize{input}};
\node at (0.45,1.25){\footnotesize{output}};
\node at (0.25,0.25){\footnotesize{Step 3 }};
\node at (1.775,0.25){\scriptsize{$\left\lfloor\begin{smallmatrix}2\\3\\1\end{smallmatrix}\right\rfloor$}};
\node at (2,1.75){$\cancel{1}3$};
\node at (1.25,0.25){$2$};
\node at (1.25,0.6){$4$};
\node at (1.25,1){$1$};
\end{tikzpicture}
&
\begin{tikzpicture}[scale=1, baseline=20pt]
\draw[thick] (0,1.5)--(1,1.5)--(1,0)--(1.5,0)--(1.5,1.5)--(2.5,1.5);
\node at (2,1.25){\footnotesize{input}};
\node at (0.45,1.25){\footnotesize{output}};
\node at (0.25,0.25){\footnotesize{Step 4}};
\node at (1.775,0.25){\scriptsize{$\left\lfloor\begin{smallmatrix}2\\3\\1\end{smallmatrix}\right\rfloor$}};
\node at (2,1.75){$3$};
\node at (1.25,0.25){$2$};
\node at (1.25,0.6){$4$};
\node at (1.25,1){$\cancel{1}$};
\node at (0.5,1.75){$1$};
\end{tikzpicture}\\
\hline
\begin{tikzpicture}[scale=1, baseline=20pt]
\draw[thick] (0,1.5)--(1,1.5)--(1,0)--(1.5,0)--(1.5,1.5)--(2.5,1.5);
\node at (2,1.25){\footnotesize{input}};
\node at (0.45,1.25){\footnotesize{output}};
\node at (0.25,0.25){\footnotesize{Step 5}};
\node at (1.775,0.25){\scriptsize{$\left\lfloor\begin{smallmatrix}2\\3\\1\end{smallmatrix}\right\rfloor$}};
\node at (2,1.75){$3$};
\node at (1.25,0.25){$2$};
\node at (1.25,0.6){$\cancel{4}$};
\node at (0.5,1.75){$14$};
\end{tikzpicture}
&
\begin{tikzpicture}[scale=1, baseline=20pt]
\draw[thick] (0,1.5)--(1,1.5)--(1,0)--(1.5,0)--(1.5,1.5)--(2.5,1.5);
\node at (2,1.25){\footnotesize{input}};
\node at (0.45,1.25){\footnotesize{output}};
\node at (0.25,0.25){\footnotesize{Step 6}};
\node at (1.775,0.25){\scriptsize{$\left\lfloor\begin{smallmatrix}2\\3\\1\end{smallmatrix}\right\rfloor$}};
\node at (2,1.75){$\cancel{3}$};
\node at (1.25,0.25){$2$};
\node at (1.25,0.6){$3$};
\node at (0.5,1.75){$14$};
\end{tikzpicture}
&
\begin{tikzpicture}[scale=1, baseline=20pt]
\draw[thick] (0,1.5)--(1,1.5)--(1,0)--(1.5,0)--(1.5,1.5)--(2.5,1.5);
\node at (2,1.25){\footnotesize{input}};
\node at (0.45,1.25){\footnotesize{output}};
\node at (0.25,0.25){\footnotesize{Step 7}};
\node at (1.775,0.25){\scriptsize{$\left\lfloor\begin{smallmatrix}2\\3\\1\end{smallmatrix}\right\rfloor$}};
\node at (1.25,0.25){$2$};
\node at (1.25,0.6){$\cancel{3}$};
\node at (0.5,1.75){$143$};
\end{tikzpicture}
&
\begin{tikzpicture}[scale=1, baseline=20pt]
\draw[thick] (0,1.5)--(1,1.5)--(1,0)--(1.5,0)--(1.5,1.5)--(2.5,1.5);
\node at (2,1.25){\footnotesize{input}};
\node at (0.45,1.25){\footnotesize{output}};
\node at (0.25,0.25){\footnotesize{Step 8}};
\node at (1.775,0.25){\scriptsize{$\left\lfloor\begin{smallmatrix}2\\3\\1\end{smallmatrix}\right\rfloor$}};
\node at (1.25,0.25){$\cancel{2}$};
\node at (0.5,1.75){$1432$};
\end{tikzpicture}
\\
\end{tabular}
$$
}
\caption{Step-by-step computation of $\mapsigma{231}(2413)=1432$.}\label{figure_sorting_operations}
\end{figure}

\section{A family of challenging $\sigma$-machines}\label{section_bivincular_result}

Let $\xi=(132,\lbrace 0,2\rbrace,\emptyset)$ be the bivincular pattern depicted in Figure~\ref{figure_bivincular_pattern_132_0_2}. Recall that an occurrence of $\xi$ in a permutation $\pi$ is a classical occurrence $\pi_i\pi_j\pi_k$ of $132$ such that $i=1$ and $k=j+1$. Alternatively, we have:
$$
\Sym(\xi)=\lbrace \pi\in\Sym: \text{if $\pi_i\pi_j\pi_k\simeq 132$, then $i\neq 1$ or $k\neq j+1$}\rbrace.
$$
The goal of this section is to show that $\Sort(\sigma)$ is a subset of $\Sym(\xi)$, unless $\sigma$ is the skew sum of $12$ with a $231$-avoiding permutation. As we will show in what follows, the geometric structure of $\Sym(\xi)$ is fairly straightforward and transparent. This suggests that the cases where $\Sort(\sigma)$ is not contained in $\Sym(\xi)$ could be the most challenging to be studied. The shortest such permutation is~$231$. In fact, as suggested by some data, the~$231$-machine seems to be the~$\sigma$-machine that sorts the largest amount of permutations. For example, it is the only one that can sort every permutation of length three.

We start by providing a simple decomposition of permutations avoiding $\xi$, from which the enumeration of $\Sym(\xi)$ follows easily. Let $\pi\in\Sym$ and suppose that $\pi_1=t+1$, for some $t\ge 0$. Write
$$
\pi=\pi_1 B_0 b_1 B_1 b_2 B_2\cdots b_t B_t,
$$
where $\lbrace b_1,\dots,b_t\rbrace=\lbrace 1,\dots,t\rbrace$, $B_0$ contains the entries between $\pi_1$ and $b_1$, and $B_i$ contains the entries between $b_i$ and $b_{i+1}$, for $i\ge 1$. We refer to this as the \emph{first-element decomposition} of $\pi$, and $B_i$ is said to be the $i$-th \emph{block}. The first-element decomposition of a permutation $\pi\in\Sym(\xi)$ is illustrated in Figure~\ref{figure_bivincular_pattern_132_0_2}.

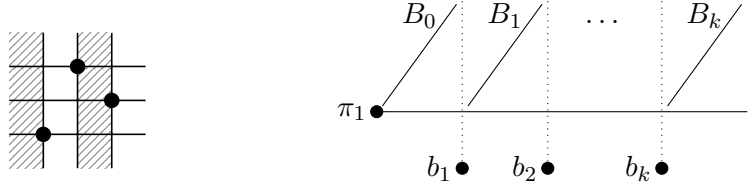
\begin{figure}[h!]
\begin{center}
\begin{DrawPerm}
\meshBox{(0,0)}{(1,4)}
\meshBox{(2,0)}{(3,4)}
\fillPerm{1,3,2}{3.99}{3.99}
\end{DrawPerm}
\hspace{2cm}
\begin{tikzpicture}[scale=0.75, baseline=20pt]
\draw (0,1)--(6.5,1);
\draw (0.1,1.1)--(1.4,2.9);
\draw (1.6,1.1)--(2.9,2.9);
\draw (5.1,1.1)--(6.4,2.9);
\draw[dotted] (1.5,0.01)--(1.5,2.99);
\draw[dotted] (3,0.01)--(3,2.99);
\draw[dotted] (5,0.01)--(5,2.99);
\filldraw (0,1) circle (3pt) node[left]{$\pi_1$};
\filldraw (1.5,0) circle (3pt) node[left]{$b_1$};
\filldraw (3,0) circle (3pt) node[left]{$b_2$};
\filldraw (5,0) circle (3pt) node[left]{$b_k$};
\node at(0.75,2.7){$B_0$};
\node at(2.25,2.7){$B_1$};
\node at(4,2.6){$\dots$};
\node at(5.75,2.7){$B_k$};
\end{tikzpicture}
\end{center}
\caption{On the left, the bivincular pattern $\xi=(132,\lbrace 0,2\rbrace,\emptyset)$. A permutation $\pi$ contains $\xi$ if $\pi$ contains an occurrence of $132$ such that no other point falls in a shaded region. On the right, the first-element decomposition of $\pi\in\Sym(\xi)$.}\label{figure_bivincular_pattern_132_0_2}
\end{figure}

\begin{lemma}\label{lemma_first_el_dec_increasing_blocks}
A permutation $\pi$ avoids $\xi$ if and only if each block in the first-element decomposition of $\pi$ is increasing.
\end{lemma}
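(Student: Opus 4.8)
The plan is to unwind the definition of $\xi$ into a concrete statement about adjacent descents and then match it against the shape of the first-element decomposition. Write $\pi_1=t+1$. First I would record two elementary structural facts: every entry appearing in a block $B_i$ is larger than $\pi_1$, since the entries $1,\dots,t$ are exactly $b_1,\dots,b_t$; and each block is a contiguous factor of $\pi$, with consecutive blocks separated by one of the small entries $b_i<\pi_1$ (and $B_0$ is the factor immediately following $\pi_1$). Next I would translate the constraints encoded by $X=\lbrace 0,2\rbrace$: a classical occurrence $\pi_{i_1}\pi_{i_2}\pi_{i_3}\simeq 132$ is an occurrence of $\xi$ precisely when $i_1=1$ and $i_3=i_2+1$. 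Hence $\pi$ contains $\xi$ if and only if there is an index $j\ge 2$ with $\pi_1<\pi_{j+1}<\pi_j$; equivalently, $\pi$ contains $\xi$ exactly when some adjacent descent $\pi_j\pi_{j+1}$ with $j\ge 2$ has both of its entries above $\pi_1$.

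For the direction ``increasing blocks implies $\pi$ avoids $\xi$'' I would argue by contraposition. Suppose $\pi$ contains $\xi$, witnessed by $\pi_1<\pi_{j+1}<\pi_j$ with $j\ge 2$. Then $\pi_j$ and $\pi_{j+1}$ both exceed $\pi_1$, so neither of them is one of the separators $b_i$; being adjacent in $\pi$ and not separated by any $b_i$, they must lie in the same block $B_i$. But $\pi_j>\pi_{j+1}$, so that block is not increasing, as desired.

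For the converse, suppose some block $B_i$ is not increasing. A sequence of distinct numbers that is not increasing has an adjacent descent, so there are consecutive positions $p,p+1$ inside $B_i$ with $\pi_p>\pi_{p+1}$. Since $B_i$ occurs after $\pi_1$ we have $p\ge 2$, and since $\pi_p$ and $\pi_{p+1}$ both belong to a block they both exceed $\pi_1$; hence $\pi_1<\pi_{p+1}<\pi_p$ is an occurrence of $\xi$ in $\pi$. Combining the two directions yields the equivalence.

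There is essentially no hard step here: the argument is a matter of unwinding definitions. The only points that deserve to be written out explicitly, to keep the reasoning airtight, are the reformulation of the two adjacency constraints ($0\in X$ forcing the ``$1$'' of the pattern into position $1$, and $2\in X$ forcing the ``$3$'' and ``$2$'' of the pattern into consecutive positions) and the observation that two adjacent entries of $\pi$ that both exceed $\pi_1$ cannot straddle a block boundary, since every such boundary is occupied by some $b_i<\pi_1$.
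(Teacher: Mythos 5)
Your proposal is correct and follows essentially the same route as the paper's proof: both directions reduce to the observation that an occurrence of $\xi$ is an adjacent descent $\pi_j>\pi_{j+1}$ with both entries above $\pi_1$, and that two such adjacent entries must lie in a common block since block boundaries are occupied by entries smaller than $\pi_1$. Your write-up is merely a little more explicit than the paper's about the reformulation of the adjacency constraints and about why a non-increasing block contains an adjacent descent.
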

\begin{proof}
Let $\pi=\pi_1 B_0 b_1 B_1 b_2 B_2\cdots b_t B_t$ be the first-element decomposition of~$\pi$ and let $i\ge 0$. Since each element of $B_i$ is greater than $\pi_1$, any descent in $B_i$ would result in an occurrence of $\xi$ in $\pi$. Thus $B_i$ is increasing if $\pi$ avoids $\xi$. On the other hand, let $\pi_1\pi_j\pi_{j+1}$ be an occurrence of $\xi$, for some $j\ge 2$. Note that $\pi_j>\pi_{j+1}>\pi_1$. Thus $\pi_j$ and $\pi_{j+1}$ are in the same block, which is not increasing.
\end{proof}

\begin{corollary}\label{lemma_first_el_dec_identity}
Let $\pi\in\Sym_n(\xi)$. If $\pi_1=1$, then $\pi=\identity_n$.
\end{corollary}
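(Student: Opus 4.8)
The plan is to read the statement straight off Lemma~\ref{lemma_first_el_dec_increasing_blocks} via the first-element decomposition. If $\pi_1 = 1$, then writing $\pi_1 = t+1$ forces $t = 0$, so there are no entries $b_1,\dots,b_t$ and no blocks $B_1,\dots,B_t$: the first-element decomposition collapses to $\pi = \pi_1 B_0 = 1\,B_0$, where $B_0$ is the suffix $\pi_2\cdots\pi_n$ and therefore consists precisely of the values $\{2,\dots,n\}$.

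Now I would invoke Lemma~\ref{lemma_first_el_dec_increasing_blocks}: since $\pi$ avoids $\xi$, every block in its first-element decomposition is increasing, in particular $B_0$ is increasing. An increasing arrangement of $\{2,\dots,n\}$ can only be $2\,3\cdots n$, so $\pi = 1\,2\cdots n = \identity_n$, as claimed.

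There is essentially no obstacle here; the only point that needs a moment's care is the degenerate bookkeeping when $t = 0$, namely checking that $B_0$ really does comprise the entire suffix $\pi_2\cdots\pi_n$ and hence exactly the values $2,\dots,n$. Once that is observed, the conclusion is immediate. (If one preferred a self-contained argument not citing the lemma, the same idea works directly: were $\pi \neq \identity_n$ with $\pi_1 = 1$, there would be a descent $\pi_j > \pi_{j+1}$ with $j \ge 2$, and then $\pi_1\pi_j\pi_{j+1} \simeq 132$ with first index $1$ and last two indices consecutive, i.e.\ an occurrence of $\xi$ --- a contradiction. Routing through Lemma~\ref{lemma_first_el_dec_increasing_blocks} is just cleaner.)
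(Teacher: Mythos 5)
Your proof is correct and follows the same route as the paper: when $\pi_1=1$ the first-element decomposition reduces to the single block $B_0=\pi_2\cdots\pi_n$, which Lemma~\ref{lemma_first_el_dec_increasing_blocks} forces to be increasing, giving $\pi=\identity_n$. The extra care about the degenerate case $t=0$ and the self-contained descent argument are fine but not needed beyond what the paper already does.
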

\begin{proof}
If $\pi_1=1$, then the first-element decomposition of $\pi$ contains only one block, which is increasing due to Lemma~\ref{lemma_first_el_dec_increasing_blocks}.
\end{proof}

\begin{theorem}\label{theorem_first_el_dec_enumer}
For $n\ge 1$, we have:
$$
|\Sym_n(\xi)|=\displaystyle{\sum_{t=0}^{n-1} t! (t+1)^{n-t-1}}
\qquad\text{(sequence $A129591$ in~\cite{Sl}).}
$$
\end{theorem}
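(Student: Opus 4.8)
The plan is to count $\xi$-avoiding permutations of length $n$ by conditioning on the value of the first entry, using the first-element decomposition together with Lemma~\ref{lemma_first_el_dec_increasing_blocks}. Suppose $\pi\in\Sym_n(\xi)$ with $\pi_1=t+1$, so that $t$ ranges over $\{0,1,\dots,n-1\}$. By Lemma~\ref{lemma_first_el_dec_increasing_blocks}, writing $\pi=\pi_1 B_0 b_1 B_1\cdots b_t B_t$, each block $B_i$ is increasing, and $b_1,\dots,b_t$ is an arbitrary permutation of $\{1,\dots,t\}$ (these are exactly the entries smaller than $\pi_1$, appearing in the order dictated by the decomposition). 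Conversely, any choice of (i) a permutation $b_1\cdots b_t$ of $\{1,\dots,t\}$, and (ii) a distribution of the remaining $n-t-1$ entries — namely $\{t+2,\dots,n\}$ — into the $t+1$ blocks $B_0,\dots,B_t$, with each block then written in increasing order, yields a permutation $\pi\in\Sym_n(\xi)$ with $\pi_1=t+1$. I would first verify this correspondence is a bijection: the decomposition reconstructs the $b_i$'s and the block contents uniquely, and Lemma~\ref{lemma_first_el_dec_increasing_blocks} guarantees the resulting permutation avoids $\xi$; no further constraints arise because the relative order of the large entries across blocks is forced by the block structure, and an increasing block together with the separators $b_i$ cannot create the forbidden configuration beyond what the lemma already rules out.

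Given the bijection, the count for a fixed $t$ is $t!$ for the choice of $b_1\cdots b_t$, times $(t+1)^{n-t-1}$ for independently assigning each of the $n-t-1$ large entries to one of the $t+1$ blocks (the internal order of each block being forced). Summing over $t$ gives
$$
|\Sym_n(\xi)|=\sum_{t=0}^{n-1} t!\,(t+1)^{n-t-1},
$$
as claimed. I would also cross-check small cases: for $n=1$ the sum is $0!\cdot 1^{0}=1$; for $n=2$ it is $0!\cdot 1^{1}+1!\cdot 2^{0}=2$; for $n=3$ it is $1+2+2=5$, matching $|\Sym_3(\xi)|$ (only $231$ and $321$ are excluded among the six permutations of length $3$, since $231$ has the occurrence $2\,3\,1$... wait — one checks directly which length-$3$ permutations avoid $\xi$), consistent with sequence $A129591$.

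The only real subtlety — and the step I would be most careful about — is the converse direction of the bijection: confirming that distributing the large entries freely among the blocks never produces an occurrence of $\xi$ other than one forced to lie within a single (hence non-increasing) block. Here one uses that an occurrence $\pi_1\pi_j\pi_{j+1}\simeq 132$ of $\xi$ requires $\pi_j>\pi_{j+1}>\pi_1$, so both $\pi_j$ and $\pi_{j+1}$ are large entries lying in consecutive positions; if they were in different blocks, they would be separated by some $b_i<\pi_1$, contradicting adjacency; so they lie in the same block, which would then have a descent — impossible by construction. This closes the argument, and the enumeration formula follows, with the sequence identification being a lookup in~\cite{Sl}.
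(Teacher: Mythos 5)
Your proof is correct and follows essentially the same route as the paper: condition on $\pi_1=t+1$, invoke Lemma~\ref{lemma_first_el_dec_increasing_blocks} to force the blocks of large entries to be increasing, and count $t!$ arrangements of the small entries times $(t+1)^{n-t-1}$ block assignments for the large ones. Your extra care with the converse direction of the bijection is already subsumed by the ``if and only if'' of Lemma~\ref{lemma_first_el_dec_increasing_blocks} (and note that for $n=3$ the unique permutation containing $\xi$ is $132$ itself, so the count $5$ checks out as you suspected).
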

\begin{proof}
For $t=0,1,\dots,n-1$, let
$$
\Sym_n^t(\xi)=\lbrace\pi\in\Sym_n(\xi):\pi_1=t+1\rbrace
\quad\text{and}\quad
f_{n,t}=|\Sym_n^t(\xi)|.
$$
We show that $f_{n,t}=t!(t+1)^{n-t-1}$, from which the thesis follows immediately. Any permutation $\pi\in\Sym_n^t(\xi)$ can be constructed uniquely as follows. The entries of $\pi$ that are smaller than $\pi_1=t+1$ can be chosen freely, since they cannot contribute to an occurrence of $\xi$. This can be done in $t!$ distinct ways. By Lemma~\ref{lemma_first_el_dec_increasing_blocks}, entries that are greater than $\pi_1$ must be arranged in increasing blocks. In other words, for each such entry it is sufficient to choose the index $j\in\lbrace 0,1,\dots,t\rbrace$ of the (increasing) block it belongs to. That is, there are $t+1$ choices for each of the $n-t-1$ elements that are greater than $\pi_1$. Therefore $f_{n,t}=t!\cdot(t+1)^{n-t-1}$, as desired.
\end{proof}

Next we wish to prove the main result of this section.

\begin{theorem}\label{theorem_bivincular_pattern_gen_result}
Let $\sigma=\sigma_1\cdots\sigma_k$ be a permutation of length at least three. The following three conditions are equivalent.
\begin{enumerate}
\item[$(1)$] $\Sort(\sigma)\not\subseteq\Sym(\xi)$.
\item[$(2)$] $\sigma= 12\ominus\beta$, for some $\beta\in\Sym(231)$.
\item[$(3)$] $\hat{\sigma}\in\Sym(231)$ and $\sigma\reverse\ge\xi$.
\end{enumerate}
\end{theorem}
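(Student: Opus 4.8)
The natural strategy is to prove the cycle of implications $(2)\Rightarrow(3)\Rightarrow(1)\Rightarrow(2)$, with the bulk of the work going into $(1)\Rightarrow(2)$. The equivalence $(2)\Leftrightarrow(3)$ should be a purely combinatorial exercise about the pattern $\sigma$ itself: if $\sigma=12\ominus\beta$ with $\beta$ nonempty, then $\sigma_1\sigma_2=k(k-1)$ up to the relative order, $\hat\sigma$ has first two entries forming $12$ followed by $\beta$ shifted, so $\hat\sigma=12\oplus\beta'$ after relabeling — wait, more carefully, $\hat\sigma$ swaps the first two entries of $\sigma$, turning $k,k-1$ into $k-1,k$; since everything after is smaller, $\hat\sigma$ has its two largest entries in positions $1,2$ in increasing order and then $\beta$ below. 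One checks directly that such a $\hat\sigma$ avoids $231$ iff $\beta$ avoids $231$ (the two leading big entries can only play the role of the ``$3$'' or the ``$2$'' in a $231$, and the analysis reduces to $\beta$). Similarly $\sigma\ge\xi\reverse$: here $\xi\reverse=(231,\{1,3\},\emptyset)$ or the appropriate reverse of $(132,\{0,2\},\emptyset)$, and one verifies that $\sigma=12\ominus\beta$ contains this vincular/bivincular pattern precisely because the first two entries $\sigma_1<\sigma_2$ are both larger than everything to their right and are adjacent. So $(2)\Leftrightarrow(3)$ is bookkeeping with the definition of $\xi$.

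For $(2)\Rightarrow(1)$ (equivalently $(3)\Rightarrow(1)$), I would exhibit an explicit $\sigma$-sortable permutation containing $\xi$. When $\sigma=12\ominus\beta$ with $\beta\in\Sym_m(231)$ nonempty, I expect a permutation of the shape ``$\pi_1$, then a copy arranged so the $\sigma$-stack outputs something $231$-avoiding, but with $\pi_1\pi_j\pi_{j+1}\simeq132$ and $k=j+1$'' to work. A promising candidate: take $\pi$ to begin with a moderately large first entry and engineer two adjacent entries $\pi_j>\pi_{j+1}$ both exceeding $\pi_1$, chosen so that the $\sigma$-stack never fills up to an occurrence of $\sigma$ — which is plausible since $\sigma$ starts with an ascent $12$, so as long as the stack content (read top to bottom, hence the reverse of what was pushed) avoids $12\ominus\beta$, we are fine. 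Concretely I would try $\pi = 2\,\beta\text{-shaped block}\,\cdots$ and check by hand that $\mapsigma{\sigma}(\pi)\in\Sym(231)$ while $\pi$ has the forbidden adjacency; Lemma~\ref{lemma_hat_sigma}, giving $\Sym(132,\sigma\reverse)\subseteq\Sort(\sigma)$, plus the observation that $\pi\not\ge\sigma\reverse$ forces $\mapsigma{\sigma}(\pi)=\pi\reverse$, lets me pick $\pi$ with $\pi\reverse\in\Sym(231)$, i.e. $\pi\in\Sym(132)$ — but then $\pi$ can't contain $\xi$, which refines $132$. So the witness must genuinely trigger the $\sigma$-stack at least once; the delicate point is arranging a single ``effective'' pop so that the output still avoids $231$.

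The main obstacle — and the heart of the theorem — is $(1)\Rightarrow(2)$: assuming $\sigma$ is \emph{not} of the form $12\ominus\beta$ with $\beta\in\Sym(231)$, show every $\sigma$-sortable $\pi$ avoids $\xi$. I would argue by contrapositive on a minimal counterexample: suppose $\pi\in\Sort(\sigma)$ contains $\xi$, so there is $\pi_1\pi_j\pi_{j+1}\simeq132$ with $\pi_j>\pi_{j+1}>\pi_1$ and $j+1$ immediately after $j$. The key is to track what the $\sigma$-stack does around positions $j$ and $j+1$. Because $\pi_j>\pi_{j+1}$ are adjacent in the input, when $\pi_j$ is on top of the stack and we try to push $\pi_{j+1}$, either it goes in (and now the stack has $\pi_{j+1}$ above $\pi_j$, a $12$ reading top-to-bottom is actually a $21$... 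I need to be careful with the top-to-bottom convention) or a pop is forced. One then shows that, unless $\sigma$ has the special skew-sum form, the forced pop — together with $\pi_1$ being the very first (and here unusually small) element still lurking at the bottom — propagates into an occurrence of $231$ in the output, contradicting $\pi\in\Sort(\sigma)$. The case split will hinge on $\hat\sigma$: by Theorem~\ref{theorem_suff_cond_class}, if $\hat\sigma\ge231$ then $\Sort(\sigma)$ is a class equal to $\Sym(132)$ or $\Sym(132,\sigma\reverse)$, both contained in $\Sym(\xi)$ since $\xi$ is a ``pattern refining $132$'' — so that case is immediate and we may assume $\hat\sigma\in\Sym(231)$. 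Then condition $(3)$ tells us the only way to \emph{fail} $(2)$ with $\hat\sigma\in\Sym(231)$ is $\sigma\not\ge\xi\reverse$, and I would show that $\sigma\not\ge\xi\reverse$ forces the $\sigma$-stack to be ``too weak'' to ever retain the configuration $\pi_1\ \pi_j\ \pi_{j+1}$ in a way that survives to the output as a $132$ — intuitively, $\sigma\not\ge\xi\reverse$ means $\sigma$'s two largest entries are not in adjacent increasing positions, so the $\sigma$-stack cannot simultaneously hold $\pi_1$ at the bottom and refuse $\pi_{j+1}$ while $\pi_j$ is resident. Making this ``cannot simultaneously'' precise — chasing the exact stack configuration at the moment of the critical pop and showing it yields a $231$ in $\mapsigma\sigma(\pi)$ — is where the real combinatorial labor lies, and I'd expect it to require a careful sub-lemma about the stack contents at the time $\pi_j$ enters, analogous to (and presumably generalizing) Lemma~\ref{lemma_hat_sigma}.
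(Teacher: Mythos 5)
Your skeleton matches the paper's: dispose of the case $\hat\sigma\ge 231$ via Theorem~\ref{theorem_suff_cond_class} (then $\Sort(\sigma)\subseteq\Sym(132)\subseteq\Sym(\xi)$), analyse the stack at the moment the occurrence $\pi_1\pi_j\pi_{j+1}$ of $\xi$ is processed, and close the cycle by exhibiting a sortable witness containing $\xi$. But the two steps that actually constitute the proof are left undone. First, for $(3)\Rightarrow(1)$ you never produce the witness: you correctly deduce that it must contain $\sigma\reverse$ (otherwise $\mapsigma{\sigma}$ acts as reversal and sortability forces $\pi\in\Sym(132)\subseteq\Sym(\xi)$), but then stop at ``a promising candidate'' of unspecified shape. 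The witness is simply $\pi=\sigma\reverse$ itself: by Lemma~\ref{lemma_hat_sigma}, $\mapsigma{\sigma}\bigl(\sigma\reverse\bigr)=\hat\sigma$, which avoids $231$ by hypothesis $(3)$, so $\sigma\reverse\in\Sort(\sigma)$; and $\sigma\ge\xi\reverse$ is literally the statement that $\sigma\reverse$ contains $\xi$. No engineering of a ``single effective pop'' is needed.

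Second, and more seriously, the heart of $(1)\Rightarrow(2)$ --- which you explicitly defer as ``where the real combinatorial labor lies'' --- is the extraction of the shape of $\sigma$ from the critical pop, and your gloss is not aimed at quite the right target. The argument runs: $\pi_1$ stays at the bottom of the stack until the end (because $|\sigma|\ge 3$), so $\pi_j$ must be popped before $\pi_{j+1}$ enters (else $\pi_{j+1}\pi_j\pi_1\simeq 231$ in the output); at that instant the next input letter is $\pi_{j+1}$ and the pop is triggered because $\pi_{j+1}\alpha_2\cdots\alpha_k\simeq\sigma$ for suitable stack content $\alpha_2,\dots,\alpha_k$. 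Sortability then forces (i) $\pi_{j+1}>\alpha_i$ for all $i\ge 3$ (else $\pi_{j+1}\alpha_i\pi_1\simeq 231$ in the output), giving $\sigma_1>\sigma_i$, and (ii) $\alpha_2>\pi_{j+1}$ (else, by (i) and $\pi_j>\pi_{j+1}$, the stack would already contain the forbidden $\pi_j\alpha_2\cdots\alpha_k\simeq\sigma$), giving $\sigma_2>\sigma_1$; combined with $\sigma_3\cdots\sigma_k$ avoiding $231$ (inherited from $\hat\sigma$), this is exactly $\sigma=12\ominus\beta$ with $\beta\in\Sym(231)$. Nothing in your sketch identifies these two inequalities, nor the fact that the conclusion is a positive structural statement about $\sigma$ read off from the bad $\pi$ rather than a contradiction with $\sigma\not\ge\xi\reverse$; note also that your heuristic reading of $\sigma\not\ge\xi\reverse$ (``the two largest entries are not in adjacent increasing positions'') is not what that condition says. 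As it stands the proposal is a plan with the decisive steps missing.
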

\begin{proof}
$(1)\hspace{-.2em}\Longrightarrow\hspace{-.2em}(2)$: Suppose that $\Sort(\sigma)\not\subseteq\Sym(\xi)$. We wish to prove that $\sigma_2>\sigma_1$, $\sigma_1>\sigma_i$, for each $i\ge 3$, and $\sigma_3\cdots\sigma_k$ avoids $231$. Let us start by showing that $\hat{\sigma}$ avoids $231$, which implies that $\sigma_3\cdots\sigma_k$ avoids $231$ as well. For a contradiction, suppose that $\hat{\sigma}\ge 231$. By Theorem~\ref{theorem_suff_cond_class}, we have:
$$
\Sort(\sigma)=\Sym\bigl(132,\sigma\reverse\bigr)\subseteq\Sym(\xi),
$$
where the last inequality follows from the fact that $132$ is the classical pattern underlying $\xi$ (and thus $\Sym(132)\subseteq\Sym(\xi)$). Hence it would be $\Sort(\sigma)\subseteq\Sym(\xi)$, which contradicts the hypothesis.
Now, due to the assumption $\Sort(\sigma)\not\subseteq\Sym(\xi)$, there exists a permutation $\pi$ such that $\pi\in\Sort(\sigma)$ and $\pi\ge\xi$. Let $\pi_1\pi_j\pi_{j+1}$ be an occurrence of $\xi$ in $\pi$. Consider the action of the $\sigma$-stack on input $\pi$. Since $\sigma$ has length at least three, $\pi_1$ remains at the bottom of the $\sigma$-stack until the end of the sorting process; that is, the last element that exits the $\sigma$-stack is $\pi_1$. Therefore $\pi_j$ must be extracted before $\pi_{j+1}$ enters, or else $\pi_{j+1}\pi_j\pi_1$ would be an occurrence of $231$ in $\mapsigma{\sigma}(\pi)$, contradicting the fact that $\pi$ is $\sigma$-sortable. Let us consider the instant when $\pi_j$ is extracted, as illustrated below:

\begin{center}
\begin{tikzpicture}[baseline=20pt]
\draw[thick] (0,2.5)--(1.5,2.5)--(1.5,0)--(3,0)--(3,2.5)--(4.5,2.5);
\node at (3.75,2.25){\textbf{input}};
\node at (0.75,2.25){\textbf{output}};
\node at (3.75,2.75){$\pi_{j+1}\cdots\pi_n$};
\node at (0.75,2.75){$\cdots\pi_j$};
\node at (3.5,0.5){$\left\lfloor\begin{smallmatrix}\sigma_1\\[-1ex] \vdots\\ \sigma_k\end{smallmatrix}\right\rfloor$};
\node at (2.25,2.5){$\cancel{\pi_j}$};
\node at (2.25,1.5){$\begin{smallmatrix}\vdots\\ \alpha_2\\[-1ex]\vdots\\ \alpha_k\\[-1ex]\vdots\end{smallmatrix}$};
\node at (2.25,0.25){$\pi_1$};
\end{tikzpicture}
\end{center}

Note that $\pi_{j+1}$ is the next element of the input. Since a pop operation is performed, the $\sigma$-stack contains a subsequence of $k-1$ elements, say $\alpha_2\alpha_3\cdots\alpha_k$ (reading from top to bottom), such that:
$$
\pi_{j+1}\alpha_2\alpha_3\cdots\alpha_k\simeq\sigma,
$$
where it could be $\alpha_2=\pi_j$ or $\alpha_k=\pi_1$. In particular, all the elements above~$\alpha_2$, including $\alpha_2$, are extracted before $\pi_{j+1}$ enters. Without losing generality, we can assume that $\alpha_3$ is still contained in the $\sigma$-stack when $\pi_{j+1}$ enters; this can be achieved, for instance, by taking the ``deepest'' such sequence $\alpha_2\cdots\alpha_k$ contained in the $\sigma$-stack. As a result, $\mapsigma{\sigma}(\pi)$ contains an occurrence $\alpha_2\pi_{j+1}\alpha_3\cdots\alpha_k$ of $\hat{\sigma}$. Now, our goal is to show that $\sigma_2>\sigma_1$ and $\sigma_1>\sigma_i$, for each $i\ge 3$; equivalently, we shall prove that:
$$
\text{(i)}\ \pi_{j+1}>\alpha_i, \text{ for each }i\ge 3,
\quad\text{and}\quad
\text{(ii)}\ \alpha_2>\pi_{j+1}.
$$
\begin{itemize}
\item[(i)] For a contradiction, suppose that $\pi_{j+1}<\alpha_i$, for some $i\ge 3$. Then $\alpha_i\neq\pi_1$ (because $\pi_1<\pi_{j+1}<\alpha_i$) and $\pi_{j+1}\alpha_i\pi_1$ is an occurrence of $231$ in $\mapsigma{\sigma}(\pi)$, a contradiction with $\pi$ being $\sigma$-sortable.
\item[(ii)] For a contradiction, suppose that $\alpha_2<\pi_{j+1}$. Then, due to the previous item and since $\pi_j>\pi_{j+1}$, we have that $\pi_j\alpha_2\alpha_3\cdots\alpha_k\simeq\sigma$. But this is impossible since the $\sigma$-stack contains $\pi_j\alpha_2\alpha_3\cdots\alpha_k$ when $\pi_j$ is extracted.
\end{itemize} 

$(2)\hspace{-0.35em}\Longrightarrow\hspace{-0.35em}(3)$: Suppose that $\sigma= 12\ominus\beta=(k-1)k\beta$, for some $\beta=\beta_1\cdots\beta_{k-2}\in\Sym(231)$. Then $\hat{\sigma}=k(k-1)\beta$ avoids $231$, since $\beta$ does so. Finally, we have
$$
\sigma\reverse=\beta_{k-2}\cdots\beta_1k(k-1)
$$
and $\beta_{k-2}k(k-1)$ is an occurrence of $\xi$ in $\sigma\reverse$.

$(3)\hspace{-.2em}\Longrightarrow\hspace{-.2em}(1)$: Suppose that $\hat{\sigma}$ avoids $231$ and $\sigma\reverse\ge\xi$. We show that $\sigma\reverse\in\Sort(\sigma)$. Due to Lemma~\ref{lemma_hat_sigma}, we have $\mapsigma{\sigma}\bigl(\sigma\reverse\bigr)=\hat{\sigma}$. Finally, $\hat{\sigma}$ avoids $231$, hence $\sigma\reverse$ is $\sigma$-sortable. We have thus proved that $\sigma\reverse\in\Sort(\sigma)$ and $\sigma\reverse\notin\Sym(\xi)$, as desired.
\end{proof}

Due to Corollary~\ref{lemma_first_el_dec_identity}, if $\Sort(\sigma)\subseteq\Sym(\xi)$, then the only $\sigma$-sortable permutation that starts with $1$ is the identity permutation. This is not true in general. For instance, the permutations $12354$, $12453$, $12534$ and $12543$ are $3421$-sortable.


\section{$\sigma$-sorted and effective permutations}\label{section_sorted_perm_fertilities}

The previous works related to $\sigma$-machines, including Section~\ref{section_bivincular_result} of this paper, have been devoted to the investigation of $\sigma$-sortable permutations, that is, the preimage of $\Sym(231)$ via $\mapsigma{\sigma}$. In this section we adopt a slightly different approach and consider the operator $\mapsigma{\sigma}$ from a dynamical perspective.

Let $\sigma$ be a permutation. The set of \emph{$\sigma$-sorted permutations} is defined as
$$
\sorted(\sigma)=\mapsigma{\sigma}(\Sort(\sigma)).
$$
Equivalently, permutations in $\sorted(\sigma)$ are those outputs of the $\sigma$-stack that are also $12$-sortable, that is:
$$
\sorted(\sigma)=\mapsigma{\sigma}(\Sym)\cap\Sym(231).
$$
We remark that a different notion of sorted permutations could be obtained by considering the set $\mapsigma{\sigma}(\Sym)$ of all the possible outputs of the $\sigma$-stack (including those that are not $12$-sortable). This framework was adopted recently~\cite{Ce} in the more general case of Cayley permutations.

The \emph{$\sigma$-fertility} of a permutation~$\gamma$ is $\fert{\sigma}(\gamma)=|\bigl(\mapsigma{\sigma}\bigr)^{-1}(\gamma)|$; less formally, $\fert{\sigma}(\gamma)$ equals the number of ways $\gamma$ can be obtained as an output of the $\sigma$-stack. The word ``fertility'' originally appeared in West's dissertation~\cite{We} with regard to the classical $21$-machine. More recently, fertility under various settings has been investigated by Defant {\etal}~\cite{De,De2,DEM,DZ}. Note that:

\begin{equation}\label{eqn:eq_sorted}
|\Sort_n(\sigma)|=\sum_{\gamma\in\sorted(\sigma)}\fert{\sigma}(\gamma).
\end{equation}

A permutation $\sigma$ is said to be \emph{effective} if $\sorted(\sigma)\subseteq\Sym(\sigma)$. Alternatively, if the $\sigma$-stack succesfully performs its intended task of preventing occurrences of $\sigma$ to be produced, at least when the input is $\sigma$-sortable. For instance, the permutation $21$ is not effective. Indeed we have $\mapsigma{21}(231)=213$ (which avoids $231$). Hence $231$ is $21$-sortable, but $\mapsigma{21}(231)$ contains $21$. On the other hand, $231$ is trivially effective since $\sorted(231)\subseteq\Sym(231)$ by definition of sorted permutations.

The motivation to study the notion of effectiveness lies in the fact that, as we show in Proposition~\ref{proposition_effective_patterns}, a permutation $\sigma$ is effective if and only if $\sorted(\sigma)=\Sym(231,\sigma)$. In this case, we can rewrite Equation~\ref{eqn:eq_sorted} as:
$$
|\Sort_n(\sigma)|=\sum_{\gamma\in\Sym(231,\sigma)}\fert{\sigma}(\gamma),
$$
thus obtaining an alternative method to enumerate $\Sort(\sigma)$. An instance where this approach is suitable will be illustrated in Example~\ref{example_123_machine}.

\begin{proposition}\label{proposition_effective_patterns}
A permutation $\sigma$ is effective if and only if $\sorted(\sigma)=\Sym(231,\sigma)$.
\end{proposition}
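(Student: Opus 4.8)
The plan is to prove the two inclusions that together give $\sorted(\sigma)=\Sym(231,\sigma)$, using effectiveness as the hypothesis in one direction and as the conclusion in the other. Recall that by definition $\sorted(\sigma)=\mapsigma{\sigma}(\Sym)\cap\Sym(231)$, so the inclusion $\sorted(\sigma)\subseteq\Sym(231)$ is automatic; everything hinges on the interaction with avoidance of $\sigma$.

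First I would handle the easy direction. If $\sorted(\sigma)=\Sym(231,\sigma)$, then in particular $\sorted(\sigma)\subseteq\Sym(\sigma)$, which is exactly the definition of $\sigma$ being effective. So this direction is immediate and requires no work beyond citing the definitions.

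For the converse, assume $\sigma$ is effective, i.e. $\sorted(\sigma)\subseteq\Sym(\sigma)$. Combined with $\sorted(\sigma)\subseteq\Sym(231)$, this already gives $\sorted(\sigma)\subseteq\Sym(231,\sigma)$. The substantive task is the reverse inclusion $\Sym(231,\sigma)\subseteq\sorted(\sigma)$. Here the key observation is that $\sorted(\sigma)=\mapsigma{\sigma}(\Sort(\sigma))$, so I need to show that every permutation $\gamma$ avoiding both $231$ and $\sigma$ is the $\sigma$-stack image of some $\sigma$-sortable permutation. The natural candidate input is $\gamma\reverse$: since $\gamma$ avoids $\sigma$, its reverse $\gamma\reverse$ avoids $\sigma\reverse$, and the excerpt already records the fact that if $\pi$ avoids $\sigma\reverse$ then $\mapsigma{\sigma}(\pi)=\pi\reverse$. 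Applying this with $\pi=\gamma\reverse$ gives $\mapsigma{\sigma}(\gamma\reverse)=(\gamma\reverse)\reverse=\gamma$. Since $\gamma$ avoids $231$, the permutation $\gamma\reverse$ is $\sigma$-sortable (using $\Sort(\sigma)=(\mapsigma{\sigma})^{-1}(\Sym(231))$), hence $\gamma=\mapsigma{\sigma}(\gamma\reverse)\in\mapsigma{\sigma}(\Sort(\sigma))=\sorted(\sigma)$. This completes the reverse inclusion, and note that this direction actually does not even use effectiveness — it shows $\Sym(231,\sigma)\subseteq\sorted(\sigma)$ unconditionally.

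The main (and really only) obstacle is recognizing that the trivial-looking identity $\mapsigma{\sigma}(\pi)=\pi\reverse$ for $\sigma\reverse$-avoiding $\pi$ is exactly the lever that produces preimages, and checking the bookkeeping that $\gamma$ avoiding $\sigma$ is equivalent to $\gamma\reverse$ avoiding $\sigma\reverse$ (a standard symmetry of pattern avoidance under reverse). Everything else is a matter of chaining the displayed equalities $\sorted(\sigma)=\mapsigma{\sigma}(\Sort(\sigma))=\mapsigma{\sigma}(\Sym)\cap\Sym(231)$ and $\Sort(\sigma)=(\mapsigma{\sigma})^{-1}(\Sym(231))$. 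I would present the argument as: (easy) effectiveness plus the automatic $\sorted(\sigma)\subseteq\Sym(231)$ gives one inclusion; (the reverse) the $\gamma\mapsto\gamma\reverse$ construction gives the other, with effectiveness nowhere needed for that half but stated as the hypothesis for symmetry of the iff.
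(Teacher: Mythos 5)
Your proof is correct and follows essentially the same route as the paper: the forward direction is the trivial inclusion $\Sym(231,\sigma)\subseteq\Sym(\sigma)$, and the substantive reverse inclusion $\Sym(231,\sigma)\subseteq\sorted(\sigma)$ is obtained in both cases from the fact that $\mapsigma{\sigma}$ acts as the reverse map on $\sigma\reverse$-avoiding inputs, so that $\gamma\reverse$ is a $\sigma$-sortable preimage of any $\gamma\in\Sym(231,\sigma)$. Your phrasing via explicit preimages $\gamma\mapsto\gamma\reverse$ and the identity $\Sort(\sigma)=\bigl(\mapsigma{\sigma}\bigr)^{-1}\bigl(\Sym(231)\bigr)$ is just an elementwise restatement of the paper's set-level computation $\mapsigma{\sigma}\bigl(\Sym(132,\sigma\reverse)\bigr)=\Sym(231,\sigma)$.
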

\begin{proof}
If $\sorted(\sigma)=\Sym(231,\sigma)$, then $\sigma$ is effective since $\Sym(231,\sigma)\subseteq\Sym(\sigma)$. To prove the opposite statement, suppose that $\sigma$ is effective, that is, $\sorted(\sigma)\subseteq\Sym(\sigma)$. Note that $\sorted(\sigma)\subseteq\Sym(231)$ by definition, and thus we have $\sorted(\sigma)\subseteq\Sym(231,\sigma)$. We wish to prove the opposite inclusion $\Sym(231,\sigma)\subseteq\sorted(\sigma)$, from which the thesis follows. Recall that $\Sym\bigl(132,\sigma\reverse\bigr)\subseteq\Sort(\sigma)$ by Lemma~\ref{lemma_hat_sigma}. Note that $\mapsigma{\sigma}$ acts as the reverse on any permutation that avoids $\sigma\reverse$, and in particular on $\Sym\bigl(132,\sigma\reverse\bigr)$. Therefore:
$$
\mapsigma{\sigma}\bigl(\Sym(132,\sigma\reverse)\bigr)=
\Sym(231,\sigma),
$$
and thus $\Sym(231,\sigma)\subseteq\sorted(\sigma)$.
\end{proof}

Our next goal is to determine which permutations are effective.

\begin{proposition}\label{theorem_sorted_hat_231}
Let $\sigma$ be a permutation. If $\hat{\sigma}\ge 231$, then $\sigma$ is effective.
\end{proposition}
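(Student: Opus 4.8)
The plan is to pin down $\Sort(\sigma)$ exactly via Theorem~\ref{theorem_suff_cond_class} and then exploit the fact that, on this particular set, the operator $\mapsigma{\sigma}$ coincides with the reverse map, so that $\sorted(\sigma)$ can be computed outright. First observe that the hypothesis $\hat{\sigma}\ge 231$ forces $\sigma$ to have length at least three, since $\hat{\sigma}$ has the same length as $\sigma$; thus Theorem~\ref{theorem_suff_cond_class} applies and $\Sort(\sigma)$ is a class. I would then split into two cases. If $\sigma\ge 231$, then $\Sort(\sigma)=\Sym(132)$, and since $231\reverse=132$ the containment $\sigma\ge 231$ gives $\sigma\reverse\ge 132$, whence $\Sym(132)\subseteq\Sym(\sigma\reverse)$. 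If instead $\sigma\not\ge 231$, then $\Sort(\sigma)=\Sym(132,\sigma\reverse)\subseteq\Sym(\sigma\reverse)$. So in either case $\Sort(\sigma)\subseteq\Sym(\sigma\reverse)$.

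Next I would invoke Lemma~\ref{lemma_hat_sigma} (equivalently, the remark preceding it): $\mapsigma{\sigma}$ acts as the reverse on every permutation avoiding $\sigma\reverse$. Applying this to each element of $\Sort(\sigma)$ gives
$$
\sorted(\sigma)=\mapsigma{\sigma}\bigl(\Sort(\sigma)\bigr)=\bigl\{\,\pi\reverse : \pi\in\Sort(\sigma)\,\bigr\}.
$$
Combining this with the elementary fact that $\pi$ avoids $\tau$ if and only if $\pi\reverse$ avoids $\tau\reverse$ (so that reversing the members of a class $\Sym(B)$ produces the class $\Sym(\{\tau\reverse:\tau\in B\})$), together with $132\reverse=231$ and $(\sigma\reverse)\reverse=\sigma$, I get: $\sorted(\sigma)=\Sym(231)$ when $\sigma\ge 231$, which coincides with $\Sym(231,\sigma)$ in that case; and $\sorted(\sigma)=\Sym(231,\sigma)$ when $\sigma\not\ge 231$. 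Hence $\sorted(\sigma)=\Sym(231,\sigma)$ regardless, and Proposition~\ref{proposition_effective_patterns} immediately yields that $\sigma$ is effective.

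I do not expect a genuine obstacle here: the argument is short once Theorem~\ref{theorem_suff_cond_class} is used to replace $\Sort(\sigma)$ by an explicit class sitting inside $\Sym(\sigma\reverse)$. The only points requiring a little care are the case distinction on whether $\sigma\ge 231$ and the bookkeeping with the reverse operation, notably the identity $231\reverse=132$ and the passage from $\Sym(B)$ to $\Sym(\{\tau\reverse:\tau\in B\})$. A self-contained alternative — arguing directly that an occurrence of $\sigma$ in $\mapsigma{\sigma}(\pi)$ for a $\sigma$-sortable $\pi$ would contradict $\hat{\sigma}\ge 231$ or the $231$-avoidance of $\mapsigma{\sigma}(\pi)$ — seems possible but distinctly more delicate, so reusing the structural description of $\Sort(\sigma)$ already available is the preferable route.
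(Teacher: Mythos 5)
Your proposal is correct and follows essentially the same route as the paper: identify $\Sort(\sigma)$ via Theorem~\ref{theorem_suff_cond_class}, observe that $\mapsigma{\sigma}$ acts as the reverse on permutations avoiding $\sigma\reverse$, deduce $\sorted(\sigma)=\Sym(231,\sigma)$, and conclude by Proposition~\ref{proposition_effective_patterns}. Your explicit case split on whether $\sigma\ge 231$ (using $\Sym(132)=\Sym(132,\sigma\reverse)$ when $\sigma\ge 231$) is slightly more careful bookkeeping than the paper's one-line version, but the argument is the same.
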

\begin{proof}
Suppose that $\hat{\sigma}\ge 231$. By Theorem~\ref{theorem_suff_cond_class}, we have $\Sort(\sigma)=\Sym\bigl(132,\sigma\reverse\bigr)$. In particular, as observed in the proof of Proposition~\ref{proposition_effective_patterns}, $\mapsigma{\sigma}$ acts as the reverse operator on $\Sym\bigl(132,\sigma\reverse\bigr)$. Hence $\sorted(\sigma)=\Sym(231,\sigma)$ and $\sigma$ is effective due to Proposition~\ref{proposition_effective_patterns}.
\end{proof}

Due to Theorem~\ref{theorem_suff_cond_class} and Proposition~\ref{theorem_sorted_hat_231}, if both $\hat{\sigma}$ and $\sigma$ contain $231$, then we have:
$$
\Sort(\sigma)=\Sym(132)
\quad\text{and}\quad
\sorted(\sigma)=\Sym(231,\sigma)=\Sym(231),
$$
hence $\mapsigma{\sigma}$ is a bijection between $\Sort(\sigma)$ and $\sorted(\sigma)$. On the other hand, if $\hat{\sigma}$ contains $231$ and $\sigma$ avoids $231$, we have that $\sorted(\sigma)=\Sym(231,\sigma)$ and $\sorted(\sigma)$ is strictly contained in $\Sym(231)$.

\begin{proposition}\label{proposition_sorted_necessary}
Let $\sigma$ be a permutation of length two or more. If $\sigma$ avoids~$231$ and $\sigma_2=1$, then $\sigma$ is not effective.
\end{proposition}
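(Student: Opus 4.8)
The strategy is to exhibit a single permutation $\pi$ for which $\mapsigma{\sigma}(\pi)$ avoids $231$ (so that $\pi$ is $\sigma$-sortable) and contains $\sigma$; then $\mapsigma{\sigma}(\pi)$ lies in $\sorted(\sigma)\setminus\Sym(\sigma)$, witnessing that $\sigma$ is not effective. (For $\sigma=21$ the construction below degenerates to $\pi=231$, recovering the example that precedes the proposition, so I would really only need to treat length at least three.)

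The first step is to pin down the shape of $\sigma$. Put $j=\sigma_1\ge 2$. Since $\sigma$ avoids $231$ and $\sigma_2=1$, any entry of $\sigma_3\cdots\sigma_k$ larger than $j$ that were followed later by one smaller than $j$ would, with $\sigma_1$, form an occurrence of $231$; hence in $\sigma_3\cdots\sigma_k$ all entries below $j$ precede all entries above $j$, so positions $1,\dots,j$ of $\sigma$ carry exactly the values $1,\dots,j$. Thus $\sigma=\alpha\oplus\delta$, where $\alpha=\sigma_1\cdots\sigma_j$ is a $231$-avoiding permutation with $\alpha_1=|\alpha|=j$ and $\alpha_2=1$, and $\delta$ is a (possibly empty) $231$-avoiding permutation.

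Next I would build $\pi$ so that the $\sigma$-stack first creates an "almost-occurrence" of $\sigma$ forcing exactly one pop. Fix distinct values together with a word $S$ of length $k-2$ so that, read from top to bottom, $x\,c\,S$ is order-isomorphic to $\sigma$, with $c$ the least of these $k$ values (the role of $\sigma_2=1$) and $x$ in the role of $\sigma_1$. Let $\pi$ begin by pushing $S$ and then $c$, so the stack content is $c\,S$ from top to bottom; having fewer than $k$ entries, it contains no forbidden configuration. The next entry of $\pi$ is $x$: pushing it would produce $x\,c\,S\simeq\sigma$, so instead $c$ is popped, becoming the first output symbol. After re-pushing $x$, the remaining entries of $\pi$ are $j-1$ "tiny" values, all below $c$, arranged so that emptying the stack later outputs them realizing the pattern $\alpha_2\cdots\alpha_j$ (that is, pushed in the reverse of that order). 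Here I must check that nothing else is popped: an occurrence of $\sigma$ appearing while the tiny values are being processed would use some $m\ge 1$ of them, and since they are the smallest entries in the stack and lie at the top, they would form a prefix of the occurrence, forcing $\{\sigma_1,\dots,\sigma_m\}=\{1,\dots,m\}$ — impossible, as $m\le j-1$ while $\sigma_1=j$; and the word $x\,S$ on its own is too short to contain $\sigma$. So the only pop is that of $c$.

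It then remains to read off $\mapsigma{\sigma}(\pi)=c\cdot(\text{tiny values})\cdot x\cdot S$ and verify the two required properties. Write $S=S'S''$, where $S'$ consists of the entries of $S$ below $x$ in value (they realize the sub-pattern $\alpha_3\cdots\alpha_j$ of $\alpha$) and $S''$ of the entries above (they realize $\delta$). Then the output is a direct sum of three consecutive blocks: $c$ with the tiny values, order-isomorphic to $\alpha$; $x$ with $S'$, which is a maximum followed by a $231$-avoiding word; and $S''$, order-isomorphic to $\delta$. Each block avoids $231$, so the direct sum does too, and $\pi\in\Sort(\sigma)$. Finally, deleting $x$ and $S'$ from the output leaves $c\cdot(\text{tiny values})\cdot S''$, which is order-isomorphic to $\alpha\oplus\delta=\sigma$; hence $\mapsigma{\sigma}(\pi)$ contains $\sigma$, completing the argument. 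The one genuinely delicate point is the verification that no pop occurs other than that of $c$ — in particular the counting argument ruling out an occurrence of $\sigma$ among the tiny values — together with confirming the block decomposition of the output; the remaining steps are routine bookkeeping with values and ranks.
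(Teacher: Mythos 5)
Your construction is exactly the paper's: your input permutation is order-isomorphic to $\sigma\reverse\ominus(\sigma_2\cdots\sigma_t)\reverse$ (with $t=\sigma_1$), the single pop of $c$ corresponds to the paper's pop of $\sigma'_2$, and the verification that the output avoids $231$ yet contains $\sigma$ proceeds along the same lines, with your block decomposition $\alpha\oplus(x\,S')\oplus\delta$ being a slightly more structured phrasing of the paper's argument. The proof is correct and essentially identical to the one in the paper.
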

\begin{proof}
To show that $\sigma$ is not effective, we wish to construct a $\sigma$-sortable permutation $\pi$ such that $\mapsigma{\sigma}(\pi)\ge\sigma$. Let $\sigma\in\Sym_k(231)$, with $k\ge 2$ and $\sigma_2=1$. Let $\sigma_1=t$, for some $t\ge 2$. Since $\sigma$ avoids $231$, it must be
$$
\lbrace\sigma_3,\dots,\sigma_t\rbrace=\lbrace 2,\dots,t-1\rbrace\text{ and }\lbrace\sigma_{t+1},\dots,\sigma_k\rbrace=\lbrace t+1,\dots,k\rbrace.
$$
Otherwise, there would be two indices, say $i\in\lbrace 3,\dots,t\rbrace$ and $j\in\lbrace t+1,\dots,k\rbrace$, such that $\sigma_i>t$ and $\sigma_j<t$. But then $\sigma_1\sigma_i\sigma_j$ would be an occurrence of $231$ in $\sigma$ (see also Figure~\ref{figure_sigma_not_effective}). An immediate consequence is that $\sigma_2\cdots\sigma_t$ is a permutation of length $t-1$. Define the permutation $\pi$ as
$$
\pi=\sigma\reverse\ominus(\sigma_2\cdots\sigma_t)\reverse=
\sigma'_k\cdots\sigma'_2\sigma'_1\sigma_t\cdots\sigma_2,
$$
where $\sigma'_i=\sigma_i+t-1$, for each $i$. We wish to prove that:
$$
\mapsigma{\sigma}(\pi)=\sigma'_2\sigma_2\cdots\sigma_t\sigma'_1\sigma'_3\cdots\sigma'_k.
$$
Let us consider the action of the $\sigma$-stack on input $\pi$. The first element of $\pi$ that cannot be pushed in the $\sigma$-stack is $\sigma'_1$, since $\sigma'_1\sigma'_2\cdots\sigma'_k$ is an occurrence of $\sigma$. Thus $\sigma'_2$ is extracted when $\sigma'_1$ is the next element of the input, and then $\sigma'_1$ is pushed on top, as illustrated in Figure~\ref{figure_sigma_not_effective}. We claim that the remaining elements $\sigma_t\cdots\sigma_2$ are free to enter the $\sigma$-stack; equivalently, that
$$
\sigma_2\cdots\sigma_t\sigma'_1\sigma'_3\cdots\sigma'_k
$$
does not contain $\sigma$. Indeed, we have $\sigma_i<t$ for each $i\in\lbrace 2,\dots,t\rbrace$, whereas $\sigma_1=t$; thus no element in the prefix $\sigma_2\cdots\sigma_t$ can play the role of $\sigma_1$ in an occurrence of $\sigma$. Furthermore, the suffix $\sigma'_1\sigma'_3\cdots\sigma'_k$ is too short to contain~$\sigma$ (it has length one less than $\sigma$). As a result, all the remaining elements $\sigma_t\cdots\sigma_2$ are pushed in the $\sigma$-stack above $\sigma'_1\sigma'_3\cdots\sigma'_k$. At the end, we obtain:
$$
\mapsigma{\sigma}(\pi)=
\sigma'_2\sigma_2\cdots\sigma_t\sigma'_1\sigma'_3\cdots\sigma'_k=
\sigma_1\sigma_2\cdots\sigma_t\sigma'_1\sigma'_3\cdots\sigma_k',
$$
where the last equality is due to the fact that:
$$
\sigma'_2=\sigma_2+t-1=1+t-1=t=\sigma_1.
$$
To complete the proof, we need to show that $\pi$ is $\sigma$-sortable and $\mapsigma{\sigma}(\pi)\ge\sigma$. To prove that $\pi$ is $\sigma$-sortable, one can easily check that $\mapsigma{\sigma}(\pi)$ avoids $231$. Indeed $\sigma$ avoids $231$; and there are no occurrences of $231$ across $\sigma_1\sigma_2\cdots\sigma_t$ and $\sigma'_1\sigma'_3\cdots\sigma_k'$, since $\sigma_i<\sigma'_j$ for each $i\in\lbrace 1,\dots,t\rbrace$ and $j\in\lbrace 1,3,\dots,k\rbrace$. Finally, $\mapsigma{\sigma}(\pi)$ contains the occurrence $\sigma_1\sigma_2\cdots\sigma_t\sigma'_{t+1}\cdots\sigma'_k$ of $\sigma$.
\end{proof}

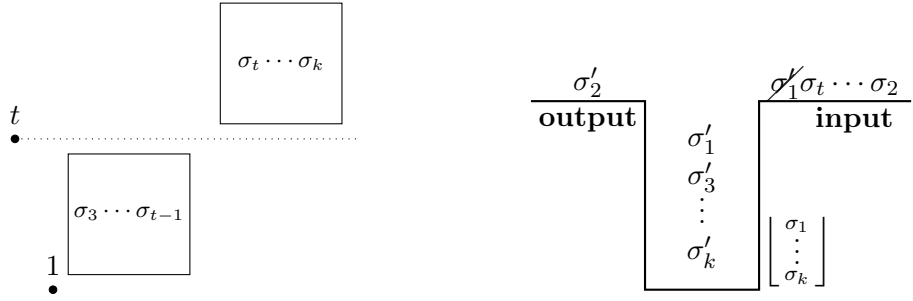
\begin{figure}
\begin{center}
\begin{tikzpicture}[baseline=20pt]
\node[circle, draw, fill=black, inner sep=1pt, label=$t$]  at (0,2){};
\node[circle, draw, fill=black, inner sep=1pt, label=$1$]  at (0.5,0){};
\draw[dotted] (0,2)--(4.5,2);
\draw (0.7,0.2) rectangle (2.3,1.8);
\draw (2.7,2.2) rectangle (4.3,3.8);
\node at (1.5,1){\footnotesize{$\sigma_3\cdots\sigma_{t-1}$}};
\node at (3.5,3){\footnotesize{$\sigma_t\cdots\sigma_k$}};
\end{tikzpicture}
\hspace{2cm}
\begin{tikzpicture}[baseline=20pt]
\draw[thick] (0,2.5)--(1.5,2.5)--(1.5,0)--(3,0)--(3,2.5)--(5,2.5);
\node at (4.25,2.25){\textbf{input}};
\node at (0.75,2.25){\textbf{output}};
\node at (3.5,0.5){$\left\lfloor\begin{smallmatrix}\sigma_1\\[-1ex] \vdots\\ \sigma_k\end{smallmatrix}\right\rfloor$};
\node at (4,2.75){$\cancel{\sigma'_1}\sigma_t\cdots\sigma_2$};
\node at (0.75,2.75){$\sigma'_2$};
\node at (2.25,2){$\sigma'_1$};
\node at (2.25,1){$\begin{matrix}\sigma'_3\\[-1ex] \vdots\\ \sigma'_k\end{matrix}$};
\end{tikzpicture}
\end{center}
\caption{Referring to Proposition~\ref{proposition_sorted_necessary}, the permutation $\sigma$, on the left, and the action of the $\sigma$-stack on $\pi$, on the right.}\label{figure_sigma_not_effective}
\end{figure}

It is easy to see that the hypotheses of Proposition~\ref{proposition_sorted_necessary}:
$$
\sigma\in\Sym(231)\quad\text{and}\quad\sigma_2=1
$$
can be reformulated equivalently as
$$
\hat{\sigma}=1\oplus\alpha,\text{ with }\alpha\in\Sym(231).
$$
Next we wish to prove that if $\hat{\sigma}$ is not the direct sum of $1$ plus a $231$-avoiding permutation, then $\sigma$ is effective. If $\hat{\sigma}\ge 231$, then $\sigma$ is effective by Proposition~\ref{theorem_sorted_hat_231}. Below we address the remaining case where $\hat{\sigma}$ avoids $231$ and $\hat{\sigma}_1\neq 1$.

\begin{proposition}\label{proposition_sorted_suff}
Let $\sigma$ be a permutation. If $\hat{\sigma}$ avoids $231$ and $\hat{\sigma}_1\neq 1$, then $\sigma$ is effective.
\end{proposition}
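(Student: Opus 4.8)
The proof I have in mind is by contradiction, analysing the run of the $\sigma$-stack. Recall that $\sigma$ is \emph{effective} precisely when $\sorted(\sigma)\subseteq\Sym(\sigma)$, and that $\sorted(\sigma)=\mapsigma{\sigma}(\Sym)\cap\Sym(231)$; so I would suppose that some input $\pi$ gives $\gamma:=\mapsigma{\sigma}(\pi)\in\Sym(231)$ with $\gamma\ge\sigma$, fix an occurrence $\gamma_{m_1}\gamma_{m_2}\cdots\gamma_{m_k}\simeq\sigma$ inside $\gamma$, and aim to produce an occurrence of $231$ in $\gamma$ — contradicting $\gamma\in\Sym(231)$.

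First I would record the facts I need. Since $\gamma_{m_1}\cdots\gamma_{m_k}$ is a sub-permutation of $\gamma$, the pattern $\sigma$ itself avoids $231$. Next, the hypotheses force $\sigma_1<\sigma_2$: if $\sigma_1>\sigma_2$ then $\hat\sigma=\sigma_2\sigma_1\sigma_3\cdots\sigma_k$ starts with the descent $\sigma_2\sigma_1$, so avoidance of $231$ by $\hat\sigma$ forces $\sigma_i>\sigma_2$ for every $i\ge3$, whence $\sigma_2=1$, contradicting $\hat\sigma_1\ne1$. If $k=2$ the only surviving permutation is $\sigma=12$, which I would settle by a short separate observation: the greedily decreasing $12$-stack can never pop its smallest input letter except during the final emptying, so $\mapsigma{12}(\pi)$ always ends with $1$, and hence any ascent of $\mapsigma{12}(\pi)$ together with that terminal $1$ is an occurrence of $231$; thus a $231$-avoiding output of the $12$-stack must be decreasing, i.e.\ must avoid $12$. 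From now on $k\ge3$; then a $\sigma$-stack holding fewer than $k-1$ letters always accepts a push, so $\pi_1$ never leaves the $\sigma$-stack before the final emptying and $\gamma_n=\pi_1$, while every letter leaving the $\sigma$-stack earlier is popped because the next input letter would otherwise complete a copy of $\sigma$.

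The crux is that $\gamma_{m_1}$ cannot leave the $\sigma$-stack during the final emptying: were it so, $\gamma_{m_1}\gamma_{m_1+1}\cdots\gamma_n$ would be the $\sigma$-stack content read from the top at the end of the input, hence $\sigma$-avoiding, yet it contains $\gamma_{m_1}\gamma_{m_2}\cdots\gamma_{m_k}\simeq\sigma$. So there is an instant at which $\gamma_{m_1}$ sits on top of the $\sigma$-stack, the next input letter $y$ cannot be pushed, and $\gamma_{m_1}$ is popped. At that instant the stack reads, from the top, $\gamma_{m_1}=u_1,u_2,\dots,u_\ell$ with $u_\ell=\pi_1$, and there is a copy $y\,u_{c_2}u_{c_3}\cdots u_{c_k}\simeq\sigma$ with $1\le c_2<c_3<\cdots<c_k\le\ell$. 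When $y$ is finally pushed the stack has shrunk to $u_r,\dots,u_\ell$ for some $r$ with $c_2<r\le\ell$ — the top letter $u_{c_2}$ of that copy must be popped to admit $y$, and $\pi_1=u_\ell$ is never popped here — so in $\gamma$ the letters $u_1,\dots,u_{r-1}$ appear in this order, then $y$, then $u_r,\dots,u_\ell$ in this order. Combining this with $\sigma_1<\sigma_2$ (so $y<u_{c_2}$), with $\sigma_2\ne1$, and with the fact that $\gamma$ also contains $\hat\sigma$ (by Lemma~\ref{lemma_hat_sigma}, since $\pi\ge\sigma\reverse$ — otherwise $\gamma=\pi\reverse$ and $\gamma\ge\sigma$ would give $\pi\ge\sigma\reverse$), I would pin down a $231$-pattern among the letters $u_1,\dots,u_\ell,y$ of $\gamma$.

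The genuinely hard part is this last extraction. One has to split according to which letters $u_{c_3},\dots,u_{c_k}$ of the threatening copy have already left the stack when $y$ enters — i.e.\ how the depths $c_3,\dots,c_k$ compare with the cut $r$ — and according to the internal order of $\sigma_1$ against $\sigma_3,\sigma_4,\dots$; in every configuration in which the obvious triple one writes down fails to be a $231$, the constraint $\hat\sigma\in\Sym(231)$ should rule that configuration out and push the search one entry deeper into the copy of $\sigma$, until some triple is forced to be a $231$. Getting this case analysis to terminate uniformly, and correctly handling the degenerate possibilities $c_2=1$ (so that $u_1$ itself plays the role of $\sigma_2$ in the threatening copy), $u_{c_k}=\pi_1$, and $y$ coinciding with one of the $\gamma_{m_i}$ — possibly by first choosing the occurrence $\gamma_{m_1}\cdots\gamma_{m_k}$ and/or the threatening copy with some extremal property — is where the real work lies; everything else is routine stack bookkeeping.
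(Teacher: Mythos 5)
Your setup is sound and coincides with the paper's: argue by contradiction from a $\sigma$-sortable $\pi$ with $\gamma=\mapsigma{\sigma}(\pi)\ni$ an occurrence of $\sigma$, observe that $\sigma$ avoids $231$, reduce to the case $\pi\ge\sigma\reverse$, locate the pop of the first letter of the occurrence, and note that this pop must be triggered by a ``threatening copy'' $y\,u_{c_2}\cdots u_{c_k}\simeq\sigma$ with $u_{c_2},\dots,u_{c_k}$ in the stack (your argument that the pop cannot happen during the final emptying is a correct variant of the paper's). The special case $\sigma=12$ is also handled correctly. But the proof is not complete: you explicitly defer ``the last extraction'' to an unspecified case analysis over the depths $c_3,\dots,c_k$ and the shape of $\sigma$, and you admit you do not know how to make it terminate. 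That step is the entire content of the proposition, so as it stands there is a genuine gap.

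The ingredient you are missing is that under the hypotheses one in fact has $\sigma_1=1$, not merely $\sigma_1<\sigma_2$: since $\sigma$ occurs in the $231$-avoiding $\gamma$, $\sigma$ avoids $231$, and if $1=\sigma_i$ with $i\ge 3$ then $\sigma_1\sigma_2\sigma_i\simeq 231$ (using $\sigma_1<\sigma_2$ and $\sigma_1\neq 1$). With $\sigma_1=1$ the conclusion falls out without any case analysis on depths, and without hunting for a $231$ among $u_1,\dots,u_\ell,y$ at a single pop event. Writing $\tilde\sigma_1\cdots\tilde\sigma_k$ for the occurrence and $y=\sigma'_1$ for the triggering input letter: first, $\tilde\sigma_1>\sigma'_1$, for otherwise $\tilde\sigma_1$ (which sits on top of the stack) could replace $\sigma'_1$ as the minimum of the threatening copy and the stack would already contain $\sigma$; second, $\tilde\sigma_2$ must be output after $\sigma'_1$, since otherwise $\tilde\sigma_1\tilde\sigma_2\sigma'_1\simeq 231$ in $\gamma$; hence $\sigma'_1\tilde\sigma_2\cdots\tilde\sigma_k$ is again an occurrence of $\sigma$ (here $\sigma_1=1$ is used once more, to replace the minimum $\tilde\sigma_1$ by the smaller $\sigma'_1$) whose first letter lies strictly to the right of $\tilde\sigma_1$ in $\gamma$. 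Choosing the original occurrence rightmost in lexicographic order of positions gives the contradiction. So the fix is not a finer stack bookkeeping but the extra arithmetic fact $\sigma_1=1$ together with this ``push the occurrence to the right'' iteration; also note that the auxiliary fact $\gamma\ge\hat\sigma$ from Lemma~\ref{lemma_hat_sigma}, which you plan to invoke, is not actually needed for the extraction.
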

\begin{proof}
We wish to prove that $\sorted(\sigma)\subseteq\Sym(\sigma)$. Let $\pi$ be a $\sigma$-sortable permutation and let $\mapsigma{\sigma}(\pi)=\gamma$. We show that $\gamma$ avoids $\sigma$. If $\pi$ avoids $\sigma\reverse$, then $\gamma=\pi\reverse$ and $\gamma$ avoids $\sigma$. Otherwise, suppose that $\pi\ge\sigma\reverse$. By Lemma~\ref{lemma_hat_sigma}, we have that $\gamma\ge\hat{\sigma}$. In particular, $\gamma$ contains both $\sigma$ and $\hat{\sigma}$; and $\gamma$ avoids $231$ since $\gamma\in\sorted(\sigma)$.
Note that, as we assumed $\hat{\sigma}_2\neq 1$, i.e. $\sigma_1\neq 1$, it must necessarily be $\sigma_2=1$. Otherwise, if $\sigma_i=1$ for some $i\ge 3$, we would have either $\sigma_1\sigma_2\sigma_i\simeq 231$, if $\sigma_1<\sigma_2$, or $\sigma_2\sigma_1\sigma_i \simeq 231$, if $\sigma_1>\sigma_2$. In the former case, $\sigma$ would contain $231$, which is impossible since $\gamma$ contains $\sigma$ and avoids $231$. In the latter case, analogously, $\hat{\sigma}$ would contain $231$, which is impossible since $\gamma$ contains $\hat{\sigma}$ and avoids $231$.

Now, let $\tilde{\sigma}_1\cdots\tilde{\sigma}_k$ be an occurrenceof $\sigma$ in $\gamma$. Let us consider the instant when $\tilde{\sigma}_1$ is extracted from the $\sigma$-stack (see also the diagram below). At this point, at least one entry $\tilde{\sigma}_j$, with $j\ge 2$, must be in the remaining part of the input. Otherwise, the $\sigma$-stack would contain all the entries $\tilde{\sigma}_1\cdots\tilde{\sigma}_k\simeq\sigma$, which is forbidden. Therefore, the reason why $\tilde{\sigma}_1$ is extracted is because the next element of the input triggers the restriction of the $\sigma$-stack (and not because there are no more elements to be processed). Equivalently, the next element of the input, say $\sigma'_1$, realizes an occurrence of $\sigma$ with some elements $\sigma'_2\cdots\sigma'_k$ contained in the $\sigma$-stack (from top to bottom). This is illustrated by the following diagram:

\begin{center}
\begin{tikzpicture}[baseline=20pt]
\draw[thick] (0,2.5)--(1.5,2.5)--(1.5,0)--(3,0)--(3,2.5)--(4.5,2.5);
\node at (3.75,2.25){\textbf{input}};
\node at (0.75,2.25){\textbf{output}};
\node at (3.75,2.75){$\sigma'_1\cdots$};
\node at (0.75,2.75){$\tilde{\sigma}_1$};
\node at (3.5,0.5){$\left\lfloor\begin{smallmatrix}\sigma_1\\[-1ex] \vdots\\ \sigma_k\end{smallmatrix}\right\rfloor$};
\node at (2.25,2.5){$\cancel{\tilde{\sigma}_1}$};
\node at (2.25,1.25){$\begin{smallmatrix}\vdots\\ \sigma'_2\\[-1ex] \vdots\\ \sigma'_k\\[-1ex] \vdots\end{smallmatrix}$};
\end{tikzpicture}
\end{center}

Note that, since $\sigma_1=1$, it must be $\tilde{\sigma}_1>\sigma'_1$, or else $\tilde{\sigma}_1\sigma'_2\cdots\sigma'_k$ would be an occurrence of $\sigma$ inside the $\sigma$-stack, which is again prohibited. As a result, $\tilde{\sigma}_2$ must follow $\sigma'_1$ in $\gamma$, or else $\gamma$ would contain the occurrence $\tilde{\sigma}_1\tilde{\sigma}_2\sigma'_1$ of $231$. Therefore, $\gamma$ contains the subsequence:
$$
\tilde{\sigma}_1\sigma'_1\tilde{\sigma}_2\cdots\tilde{\sigma}_k.
$$
In particular, $\sigma'_1\tilde{\sigma}_2\cdots\tilde{\sigma}_k$ is an occurrence of $\sigma$, and $\sigma'_1$ is strictly to the right of $\tilde{\sigma}_1$. We can thus repeat the same argument with $\sigma'_1\tilde{\sigma}_2\cdots\tilde{\sigma}_k$ in place of $\tilde{\sigma}_1\tilde{\sigma}_2\cdots\tilde{\sigma}_k$, until we eventually find a contradiction\footnote{Equivalently, this yields a contradiction if $\tilde{\sigma}_1\cdots\tilde{\sigma}_k$ is supposed to be the rightmost occurrence of $\sigma$ in $\gamma$ with respect to lexicographical order of indices.}.
\end{proof}

Proposition~\ref{theorem_sorted_hat_231}, Proposition~\ref{proposition_sorted_necessary}, and Proposition~\ref{proposition_sorted_suff} lead to the following characterization of effective permutations.

\begin{corollary}\label{corollary_sorted_class}
Let $\sigma$ be a permutation of length two or more. Then $\sigma$ is not effective if and only if $\hat{\sigma}=1\oplus\alpha$, for some $\alpha\in\Sym(231)$.
\end{corollary}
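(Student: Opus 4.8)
The plan is to glue Propositions~\ref{theorem_sorted_hat_231}, \ref{proposition_sorted_necessary} and~\ref{proposition_sorted_suff} together along a trichotomy for $\hat{\sigma}$. Every permutation $\sigma$ of length at least two falls into exactly one of three classes: (a) $\hat{\sigma}\ge 231$; (b) $\hat{\sigma}$ avoids $231$ and $\hat{\sigma}_1=1$; (c) $\hat{\sigma}$ avoids $231$ and $\hat{\sigma}_1\neq 1$. Proposition~\ref{theorem_sorted_hat_231} handles case (a) (effective), Proposition~\ref{proposition_sorted_suff} handles case (c) (effective), and Proposition~\ref{proposition_sorted_necessary} handles case (b) (not effective), once we identify case (b) with the hypotheses of that proposition. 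So the whole point is to translate between the three propositions' hypotheses and the single condition ``$\hat{\sigma}=1\oplus\alpha$ with $\alpha\in\Sym(231)$''.

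First I would record two elementary equivalences. When $\hat{\sigma}$ avoids $231$, the condition $\hat{\sigma}_1=1$ is the same as $\hat{\sigma}=1\oplus\alpha$ for some $\alpha\in\Sym(231)$: indeed $\hat{\sigma}_1=1$ forces $\hat{\sigma}=1\oplus(\hat{\sigma}_2\cdots\hat{\sigma}_k)$, and the suffix inherits $231$-avoidance. Second --- this is the reformulation already noted right before Proposition~\ref{proposition_sorted_suff} --- the condition ``$\hat{\sigma}=1\oplus\alpha$ with $\alpha\in\Sym(231)$'' is equivalent to ``$\sigma\in\Sym(231)$ and $\sigma_2=1$''. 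For this, recall $\hat{\sigma}=\sigma_2\sigma_1\sigma_3\cdots\sigma_k$, so $\hat{\sigma}_1=1$ is literally $\sigma_2=1$; and when $\sigma_2=1$ the entry $1$ is the global minimum sitting in position $2$, hence it cannot serve as the final (minimal) letter of a $231$-occurrence, nor as a non-minimal letter, so $\sigma$ avoids $231$ if and only if the subpermutation $\sigma_1\sigma_3\cdots\sigma_k\simeq\alpha$ avoids $231$.

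With these equivalences the corollary follows at once. For the ``if'' direction: if $\hat{\sigma}=1\oplus\alpha$ with $\alpha\in\Sym(231)$, then $\sigma\in\Sym(231)$ and $\sigma_2=1$, so $\sigma$ is not effective by Proposition~\ref{proposition_sorted_necessary}. For the ``only if'' direction: suppose $\sigma$ is not effective. The contrapositive of Proposition~\ref{theorem_sorted_hat_231} gives $\hat{\sigma}\not\ge 231$, and then the contrapositive of Proposition~\ref{proposition_sorted_suff} forces $\hat{\sigma}_1=1$; so $\hat{\sigma}$ avoids $231$ and starts with $1$, which by the first equivalence means $\hat{\sigma}=1\oplus\alpha$ with $\alpha\in\Sym(231)$.

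I do not expect a real obstacle here: all the combinatorial substance lives in the three preceding propositions, and what remains is the bookkeeping of the equivalences together with checking that the trichotomy is exhaustive and mutually exclusive. The only place to be mildly careful is the pattern argument showing that, for $\sigma$ with $\sigma_2=1$, avoidance of $231$ passes between $\sigma$ and $\alpha$ --- but this is immediate once one observes that the entry $1$ in position $2$ can never participate in an occurrence of $231$.
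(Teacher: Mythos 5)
Your proposal is correct and follows exactly the route the paper intends: the paper states the corollary as an immediate consequence of Propositions~\ref{theorem_sorted_hat_231}, \ref{proposition_sorted_necessary} and~\ref{proposition_sorted_suff}, having already noted (just before Proposition~\ref{proposition_sorted_suff}) the equivalence between ``$\sigma\in\Sym(231)$ and $\sigma_2=1$'' and ``$\hat{\sigma}=1\oplus\alpha$ with $\alpha\in\Sym(231)$''. You simply make explicit the trichotomy and the bookkeeping that the paper leaves implicit, and both are carried out correctly.
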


If $\sigma$ is effective, then $\sorted(\sigma)=\Sym(231,\sigma)$ is completely determined (and enumerated). On the other hand, due to Corollary~\ref{corollary_sorted_class}, there are $\catalan_{n-1}=|\Sym_{n-1}(231)|$ patterns of length $n$ that are not effective, where $\catalan_n$ is the $n$-th Catalan number. Table~\ref{table_sorted_table} contains the first nine terms of the sequence $\lbrace |\sorted_n(\sigma)|\rbrace_{n\ge 1}$ when $\sigma$ is not effective and has length at most four. The only match in the OEIS~\cite{Sl} is $\sorted(21)$, which corresponds to the classical case of West-$2$-stack-sorting. The set $\sorted(21)$ was described and enumerated by Mireille Bousquet-Melou~\cite{BM2}. Enumerating the set $\sorted(\sigma)$ in all the other cases seems to be a much more challenging task, and remains an open problem.

\begin{table}
\begin{tabular}{lcr}
\begin{tabular}[t]{l}
\toprule
Effective permutations\\
\midrule
12\\
\midrule
123, 132, 231,321\\
\midrule
1234, 1243, 1324, 1342\\
1423, 1432, 2314, 2341\\
2413, 2431, 3142, 3214\\
3241, 3412, 3421, 4213\\
4231, 4312, 4321\\
\bottomrule
\end{tabular}
& &
\begin{tabular}[t]{ll}
\toprule
$\sigma$ & $|\sorted_n(\sigma)|,\ 1\le n\le 9$\\
\midrule
21   & A027432\\
213  & 1, 2, 4, 9, 22, 58, 161, 466, 1390\\
312  & 1, 2, 4, 8, 17, 40, 104, 291, 855\\
2134 & 1, 2, 5, 13, 34, 91, 252, 724, 2150\\
2143 & 1, 2, 5, 13, 35, 97, 277, 813, 2448\\
3124 & 1, 2, 5, 13, 34, 90, 244, 683, 1979\\
4123 & 1, 2, 5, 13, 33, 82, 203, 510, 1321\\
4132 & 1, 2, 5, 13, 34, 89, 234, 622, 1684\\
\bottomrule
\end{tabular}
\end{tabular}
\caption{Effective permutations, on the left. Number of $\sigma$-sorted permutations when $\sigma$ is not effective, on the right.}\label{table_sorted_table}
\end{table}

\begin{example}\label{example_123_machine}
Let $\sigma=123$. The authors of~\cite{CeClF} showed that $\Sort(123)=
1+\sum_{j=1}^{n-1}(n-j)\catalan_j$ by providing a detailed geometric description of $123$-sortable permutations. They also computed the generating function of $\Sort(123)$ via a bijection with a class of pattern-avoiding Schr\"oder paths. Here we use $\sigma$-sorted permutations as an alternative approach to the enumeration of $\Sort(123)$. By Corollary~\ref{corollary_sorted_class}, $123$ is effective and $\sorted(123)=\Sym(123,231)$. It is easy to see that any permutation $\gamma\in\Sym_n(123,231)$ uniquely decomposes as
$$
\gamma=
\identity_i\reverse\ominus
\bigl(\identity_j\reverse\oplus\identity_k\reverse\bigr),
$$
for some $j\ge 1$ and $i,k\ge 0$, with $i+j+k=n$. Now, let $\gamma$ be as above and let $\pi\in\bigl(\mapsigma{123}\bigr)^{-1}(\gamma)$.
Due to what proved in~\cite{CeClF} (namely Corollary 5.3, Corollary 5.6 and Theorem 5.7), either $\pi=\identity_n$, if $k=0$ and $\gamma=\identity_n\reverse$, or $\pi$ is uniquely obtained by:
\begin{enumerate}
\item Choosing a $213$-avoiding permutation of length $j\in\lbrace 1,\dots,n-1\rbrace$;
\item Inserting the string $j+1,j+2,\dots,j+k$ at the beginning;
\item Inserting the string $j+k+1,j+k+1,\dots,j+k+i$ immediately after the entry equal to $j-1$.
\end{enumerate}
In the latter case, $\pi$ is uniquely determined by picking a $213$-avoiding permutations of length $j$ and an integer $k\in\lbrace 1,\dots,n-j\rbrace$. Therefore:
$$
\fert{123}(\gamma)=
\begin{cases}
1, &\quad\text{ if } k=0;\\
(n-j)\catalan_j, &\quad\text{ if } k\ge 1,
\end{cases}
$$
and
$$
\Sort_n(123)=
\sum_{\gamma\in\sorted_n(123)}\fert{123}(\gamma)=
1+\sum_{j=1}^{n-1}(n-j)\catalan_j,
$$
as expected.
\end{example}

\section{Final remarks}\label{section_final_remarks}

In Section~\ref{section_bivincular_result} we have defined the binvincular pattern~$\xi=(132,\lbrace 0,2\rbrace,\emptyset)$ and determined which permutations $\sigma$ satisfy $\Sort(\sigma)\subseteq\Sym(\xi)$. Then, in Section~\ref{section_sorted_perm_fertilities} we have characterized effective permutations, that is, those where $\sorted(\sigma)=\Sym(231,\sigma)$. These two results, together with Theorem~\ref{theorem_suff_cond_class}~\cite{CeClF}, open the door to a classification of $\sigma$-machines. More in detail, we have that:

\begin{itemize}
\item[(Cls)] \textbf{Theorem~\ref{theorem_suff_cond_class}}: $\Sort(\sigma)$ is a class if and only if $\hat{\sigma}\ge 231$. In this case, we have $\Sort(\sigma)=\Sym(132)$, if $\sigma\ge 231$, and $\Sort(\sigma)=\Sym\bigl(132,\sigma\reverse\bigr)$, if $\sigma\not\ge 231$.
\item[($\xi$)] \textbf{Theorem~\ref{theorem_bivincular_pattern_gen_result}}: $\Sort(\sigma)\not\subseteq\Sym(\xi)$ if and only if $\hat{\sigma}\not\ge 231$ and $\sigma\reverse\ge\xi$.
\item[(Eff)] \textbf{Corollary~\ref{corollary_sorted_class}}: $\sigma$ is not effective if and only if $\hat{\sigma}=1\oplus\alpha$, for some $\alpha\in\Sym(231)$.
\end{itemize}

Permutations can be partitioned according to which of the above conditions they satisfy, and the arising classification is illustrated in Table~\ref{table_classification} for permutations of length three and four.

\begin{table}
\begin{center}
\def\arraystretch{1.375}
\begin{tabular}{p{16mm}p{16mm}cccp{42mm}}
\toprule
hyp 1 & hyp 2 & Cls & Eff & $\xi$ & Permutations\\
\midrule
$\hat{\sigma}\ge 231$ & $\sigma\ge 231$
& $\checkmark$ & $\checkmark$ & $\checkmark$
& 1342, 2341, 2431, 3142\newline 3241, 4231\\
& $\sigma\not\ge 231$
& $\checkmark$ & $\checkmark$ & $\checkmark$
& 321, 3214, 4213, 4312\newline 4321\\
\hline
$\hat{\sigma}\not\ge 231$\newline $\hat{\sigma}_1\neq 1$ & $\sigma\not\ge 231$
& $\times$ & $\checkmark$ & $\checkmark$
& 123, 132, 1234, 1243\newline 1324, 1423, 1432\\
& $\sigma\reverse\not\ge\xi$\newline $\sigma\ge 231$
& $\times$ & $\checkmark$ & $\checkmark$
& 2314, 2413\\
& $\sigma\reverse\ge\xi$
& $\times$ & $\checkmark$ & $\times$
& 231, 3412, 3421\\
\hline
$\hat{\sigma}\not\ge 231$\newline $\hat{\sigma}_1=1$
&
& $\times$ & $\times$ & $\checkmark$
& 213, 312, 2134, 2143\newline 3124 4123, 4132\\
\bottomrule
\end{tabular}
\end{center}
\caption{Classification of $\sigma$-machines. The leftmost two columns denote the hypotheses satisfied by $\sigma$. The columns Cls, Eff and $\xi$ contain a checkmark if $\Sort(\sigma)$ is a class, $\sigma$ is effective and $\Sort(\sigma)\subseteq\Sym(\xi)$, respectively. Permutations of length three and four are listed in the last column.}\label{table_classification}
\end{table}

At the top of the table we find the simplest family of $\sigma$-machines, that is, those where $\hat{\sigma}\ge 231$. Here $\Sort(\sigma)$ is a permutation class by Theorem~\ref{theorem_suff_cond_class}, $\Sort(\sigma)$ is contained in $\Sym(\xi)$ by Theorem~\ref{theorem_bivincular_pattern_gen_result}, and $\sigma$ is effective by Corollary~\ref{corollary_sorted_class}. More precisely, the first row contains those permutations where $\sigma\ge 231$ and $\Sort(\sigma)=\Sym(132)$, whereas the second row contains those where $\sigma\not\ge 231$ and $\Sort(\sigma)=\Sym\bigl(132,\sigma\reverse\bigr)$.

The remaining part of the table contains permutations where $\hat{\sigma}$ avoids $231$ and $\Sort(\sigma)$ is not a class. In the middle we find those that are effective. Equivalently, due to Corollary~\ref{corollary_sorted_class} (and the assumption $\hat{\sigma}\not\ge 231$), those where $\hat{\sigma}_1\neq 1$. Now, if $\sigma$ avoids $231$, then necessarily $\sigma\reverse$ avoids $\xi$ (since $132$ is the classical pattern underlying $\xi$); thus $\sigma$ is effective and $\sorted(\sigma)=\Sym(231,\sigma)$. On the other hand, if $\sigma\ge 231$ and $\sigma\reverse$ avoids $\xi$, then $\sigma$ is still effective, but $\sorted(\sigma)=\Sym(231)$. In both cases, we have $\Sort(\sigma)\subseteq\Sym(\xi)$. Finally, if $\sigma\reverse\ge\xi$, then $\sigma\ge 231$, $\sorted(\sigma)=\Sym(231)$, and $\Sort(\sigma)$ is not a subset of $\Sym(\xi)$.

The bottom part of the table contains the permutations that are not effective. In this case, we have $\hat{\sigma}\not\ge 231$ and $\hat{\sigma}_1=1$; in particular, this implies that $\sigma\reverse$ avoids $\xi$, and thus $\Sort(\sigma)\subseteq\Sym(\xi)$. Otherwise, if $\sigma\reverse\ge\xi$, then we would have $\sigma\ge 231$ and, since $\hat{\sigma}_1=\sigma_2=1$, $\hat{\sigma}$ would contain $231$ as well, which is a contradiction.

To sum up, the last two rows of the table seem to contain the permutations~$\sigma$ whose corresponding $\sigma$-machines are the most challenging to be studied: the penultimate row contains those where $\Sort(\sigma)\not\subseteq\Sym(\xi)$, while the last row contains those that are not effective. For what concerns permutations of length three, three cases have been solved so far~\cite{CeClF,CeClFS}:

\begin{itemize}
\item $321$, where $\Sort(321)=\Sym(123,132)$ is a class;
\item $123$ and $132$, which are effective permutations with $\Sort(\sigma)\subseteq\Sym(\xi)$.
\end{itemize}

The remaining three are currently open, namely:

\begin{itemize}
\item $231$, which is effective but has $\Sort(231)\not\subseteq\Sym(\xi)$;
\item $213$ and $312$, which satisfy $\Sort(\sigma)\subseteq\Sym(\xi)$ but are not effective.
\end{itemize}

The first ten terms of the arising sequences can be found in Table~\ref{table_unsolved_patterns}. Among them, the most promising case is $\sigma=312$. The OEIS~\cite{Sl} suggests that $|\Sort_n(312)|$ could be equal to the number of ascent sequences of length $n$ that avoid $201$~\cite{DS}, denoted as $A(201)$. These have been shown~\cite{CC} to be in bijection with the set $F(3412)$ of Fishburn permutations avoiding $3412$. A numerical investigation suggests the equidistribution of the following pairs of statistics:
\begin{itemize}
\item[-] (Left-to-right maxima, Right-to-left maxima) on $\Sort(312)$;
\item[-] (Left-to-right maxima, Left-to-right minima) on $F(3412)$;
\item[-] (Right-to-left minima, number of zeros) on $A(201)$.
\end{itemize}
Despite the strong evidence, the problem of finding bijections between $\Sort(312)$ and the other two sets remains open.

Regarding the effective permutation $\sigma=231$, where $\sorted(231)=\Sym(231)$, a formula to compute the $231$-fertility of any $231$-avoiding permutation would allow us to count $\Sort(231)$ by replicating the approach adopted in Example~\ref{example_123_machine} for $\sigma=123$.

\begin{table}
\centering
\def\arraystretch{1.1}
\begin{tabular}{llr}
\toprule
$\sigma$ & $|\Sort_n(\sigma)|, 1\le n\le 10$ & \textbf{OEIS}\\
\midrule
213 & 1, 2, 5, 16, 62, 273, 1307, 6626, 35010, 190862 & ?\\
231 & 1, 2, 6, 23, 102, 496, 2569, 13934, 78295, 452439 & ?\\
312 & 1, 2, 5, 15, 52, 201, 843, 3764, 17659, 86245 & A202062\\
\bottomrule
\end{tabular}
\caption{Enumerative data for unsolved permutations of length three.}
\label{table_unsolved_patterns}
\end{table}





\end{document}